\documentclass[12pt]{article}
\usepackage{amssymb}
\usepackage{amsthm}
\usepackage{latexsym,array}
\usepackage{amsthm}
\usepackage{amsmath, mathrsfs}
\usepackage[english]{babel}
\usepackage{cite}
\usepackage{color}

\newtheorem{theorem}{Theorem}[section]
\newtheorem{definition}[theorem]{Definition}

\newtheorem{proposition}[theorem]{Proposition}
\newtheorem{corollary}[theorem]{Corollary}
\newtheorem{remark}[theorem]{Remark}

\hyphenation{in-tro-duction re-gu-lar res-pect theo-ry quan-tum ope-ra-tor hyper-ho-lo-mor-phic pro-per-ties}

\begin{document}

\title{A Borel-Pompeiu formula in a $(q,q')$-model of quaternionic analysis}
\small{
\author {Jos\'e Oscar Gonz\'alez-Cervantes$^{(1)}$, Juan Bory-Reyes$^{(2)}$,\\and\\ Irene Sabadini$^{(3)}$}
\vskip 1truecm
\date{\small $^{(1)}$ Departamento de Matem\'aticas, ESFM-Instituto Polit\'ecnico Nacional. 07338, Ciudad M\'exico, M\'exico\\ Email: jogc200678@gmail.com\\$^{(2)}$ {SEPI, ESIME-Zacatenco-Instituto Polit\'ecnico Nacional. 07338, Ciudad M\'exico, M\'exico}\\ Email: juanboryreyes@yahoo.com \\$^{(3)}$ Politecnico di Milano, Dipartimento di Matematica, Via E. Bonardi 9, 20133
Milano, Italy\\ Email: irene.sabadini@polimi.it
}
\maketitle

\begin{abstract}
\small{
The study of $\psi-$hyperholomorphic functions defined on domains in $\mathbb R^4$ with values in $\mathbb H$, namely null-solutions of the $\psi-$Fueter operator, is a topic which captured great interest in quaternionic analysis. This class of functions is more general than that of Fueter regular functions.

In the setting of $(q,q')-$calculus, also known as post quantum calculus, we introduce a deformation of the $\psi-$Fueter operator written in terms of suitable difference operators, which reduces to a deformed $q$ calculus when $q'=1$. We also prove the Stokes and  Borel-Pompeiu formulas in this context.

This work is the first investigation of results in quaternionic analysis in the setting of the $(q,q')-$calculus theory.}
\vskip 0.3truecm
\small{
\noindent
\textbf{Keywords.} $(q,q')-$calculus, {Stokes formula}, Borel-Pompeiu formula, quaternionic analysis\\
\noindent
\textbf{MSC 2020.} Primary: 30G30; 30G35, 05A30, Secondary: 46S05, 47S05}
\end{abstract}

\section{Introduction}
Quantum calculus, or $q-$calculus for short, is the counterpart of classical calculus without the notion of limits. It has gained popularity during the last two decades, and a wide variety of new results can be found in the literature.

The study of $q-$calculus, goes back to the work of Euler. However, an extensive study has been performed at the beginning of the past century by  Jackson  \cite{jackson1, jackson2, jackson3}, who introduced the notion of $q-$derivative. For a wide discussion of $q-$calculus we refer the reader to \cite{KacChe, Ernst} and the references therein.

In classical calculus, the ordinary derivative $f'(x_0)$ of a function $f(x)$ of one variable ($x\in \mathbb R$) at $x=x_0$ is given by the limit, if it exists, of the expression
\begin{align*}
\frac{f(x)-f(x_0)}{x-x_0}
\end{align*}
as $x$ approaches to $x_0$. If we take $x=qx_0$, where $q$ is a fixed number different from $1$ and do not take the limit, the corresponding expression
\begin{align*}
\frac{f(qx_0)-f(x_0)}{qx_0-x_0}
\end{align*}
 is a difference operator, which is the so-called $q-$derivative. We are now in the framework of $q$-calculus, also called quantum calculus.

Applications of $q-$calculus have been studied and investigated intensively, especially for the connections with physics. Inspired and motivated by these applications, many researchers have developed a version of this calculus based on two parameters, namely the so-called $(q, q')-$model (also called post quantum calculus), which is used efficiently in various areas of mathematics and also in quantum physics. More information and results concerning the $(q,q')-$calculus are e.g. in \cite{BurKli, Dem, NSP, Gupta2018}.

For $0 < q' < q \leq 1$, the $(q,q')-$derivative of the function $f(x)$ is defined by
\begin{align*}
D_{q,q'}f(x) := \frac{f(qx) - f(q'x)}{(q - q')x}, \quad  x\neq 0
\end{align*}
{and was first proposed by Chakrabarti and Jagannathan \cite{CJ}, as a generalization of the Jackson derivative (for $q' = 1$) of the symmetric derivative (for $q' = q^{-1}$) and finally of the McAnally derivative (for $q \rightarrow  q^{1-\lambda}, q' \rightarrow  q^{-\lambda}$, where $q$ and $\lambda$ are parameters), see  the formulation in \cite{McAnally1, McAnally2}.}

Note that $(q,q')-$derivative presents invariance under the transformation $q \leftrightarrow q'$.

In this paper we combine the study of $(q,q')$-calculus with the study of $\psi-$hy\-per\-holo\-mor\-phic functions, a topic of great interest in quaternionic analysis.
These functions are defined on domains in $\mathbb R^4$, have values in $\mathbb H$, and are null-solutions of the so-called $\psi-$Fueter operator related to a structural set $\psi$ of $\mathbb H^4$.

The study of quaternionic functions of one quaternionic variable which are "holomorphic", in a suitable sense, started in the thirties of the past century with the works of  Moisil and Fueter \cite{Moisil,Fu1}. These functions are nowadays called Cauchy-Fueter regular (or Fueter regular, for short) and they have been extensively studied by the school around Fueter. For an account of these functions the reader may consult the survey of Sudbery \cite{sudbery}, elaborated from the original sources in German, or the books \cite{CSSS,GHS,GS2} and the references therein.

Our study is a twofold extension of the previous studies in hypercomplex analysis: on one hand we are going beyond the $q$-calculus already studied in \cite{CS1,CS2,zimmermann2022steps} since we introduce here for the first time the $(q,q')$-calculus; on the other hand, we work with an operator more general than the classical Fueter operator, namely with the $\psi$-operator.

It is worthwhile to mention that the papers \cite{CS1,CS2,zimmermann2022steps} are in the Clifford analysis setting, however our study can be generalized to that { framework with the techniques used in this paper}. Furthermore, the idea of a several variables $q-$calculus (also called  multiple  $q-$calculus, see e.g. \cite{AAD}) is already formulated and introduced in the literature, see e.g. \cite{DAA, Pashaev2014ExactlySQ, AAD, ANT, FB} with no claim of completeness. This {work} opens the way to a multiple $q-$calculus in quaternionic analysis, and more in general, in Clifford analysis.

The plan of the paper is as follows. Section 2 contains a brief summary of the standard facts on quaternionic analysis and revises some basic definitions and properties of the $(q, q')-$calculus. At the end we introduce the notion of quaternionic $(q,q')-$derivative, as a natural generalization of the post quantum derivative operator. Section 3 deals with a multiple $(q,q')-$deformation of the $\psi-$Fueter operator. In the final Section 4, the associated $(q, q')-$Stokes's formula and the $(q, q')-$Borel-Pompeiu formula are obtained.

\section{Preliminaries}
\subsection{Quaternionic analysis}

Nowadays, quaternionic analysis is regarded as a broadly studied extension of classical analysis offering a successful generalization of complex analysis. It deals with the study of functions in the kernel of a generalized Cauchy-Riemann operator. In particular, when the operator is written in terms of an orthonormal basis $\psi$ different from the canonical basis, it gives the notion of $\psi-$hyperholomorphic functions. Below we repeat the basics of this function theory.
Some material is taken from \cite{shapiro1,shapiro2,Krav,KS} to which we refer the reader for more information.

Let $\mathbb H$ be the skew field of real quaternions and let $e_0=1,e_1, e_2, e_3$ be the quaternion imaginary units that fulfill the relations
\[e_ie_j+e_je_i=-2\delta_{ij},\;i,j=1,2,3\]
\[e_1e_2=e_3;\;e_2e_3=e_1;\;e_3e_1=e_2.\]

For any $x=\sum_{j=0}^{3}x_je_j\in\mathbb{H}$, the norm of $x$ is defined to be $|x|^2=\sum_{j=0}^3a_j^2$. Moreover, it is important to note that the norm is multiplicative in $\mathbb H$, namely, for $x, y\in\mathbb H$ we have $|xy|=|x||y|$.

We shall assume that $\mathbb H$ is equipped with an inner product $\langle \cdot,\cdot\rangle$.
The  set $\psi=\{\psi_0, \psi_1,\psi_2,\psi_3\}\subset \mathbb H$ is called a {\em structural set} if
\begin{equation}\label{(1)}
\langle  \psi_k, \psi_m\rangle =\delta_{k,m},
\end{equation}
for $k,m=0,1,2,3$, where $\delta_{k,m}$ denotes the Kronecker's symbol. A structural set $\psi$ is  a basis for $\mathbb H$ and so for any $x \in\mathbb H$ there exist
$x_0, x_1,x_2,x_3\in\mathbb R$ such that $x$, in terms of the basis $\psi$, can be written as
\begin{equation}\label{psix}
x_{\psi} := \sum_{k=0}^3 x_k\psi_k.
\end{equation}

From \eqref{(1)} we deduce that $\langle x,y \rangle_{\psi}=\sum_{k=0}^3 x_k y_k,$ where $x_{\psi} =\sum_{k=0}^3 x_k \psi_k $ and $ y_{\psi} =\sum_{k=0}^3 y_k \psi_k$ with $x_k, y_k\in \mathbb R$ for all $k$.

In the sequel, we identify $\mathbb H$ with $\mathbb R^4$ and we shall consider functions $f$ defined in a bounded domain $\Omega\subset\mathbb R^4 \cong \mathbb H$ with values in $\mathbb H$. The values are written as $f=\sum_{k=0}^3 f_k \psi_k$, where $f_k$ are $\mathbb R$-valued functions defined in $\Omega$. Properties as continuity, differentiability, integrability and so on, which as ascribed to $f$ are referred to the components $f_k$, for $k= 0,1,2,3$. We will follow standard notations, for example $C^{1}(\Omega, \mathbb H)$ denotes the set of continuously differentiable $\mathbb H$-valued functions defined in $\Omega$.

We now recall the concept of $\psi-$hyperholomorphic functions and to this end, we introduce the so-called $\psi$-Fueter operators.
\begin{definition}
The left- and the right-$\psi$-Fueter operators acting on $C^1(\Omega,\mathbb H)$ are defined by
$${}^{{\psi}}\mathcal D[f] :=  \sum_{k=0}^3 \psi_k \partial_k f,$$
and
$$\mathcal  D^{{\psi}}[f]:= \sum_{k=0}^3 \partial_k f \psi_k,$$
respectively, for all $f \in C^1(\Omega,\mathbb H)$, where $\partial_k:=\displaystyle\frac{\partial }{\partial x_k}$.
\end{definition}
It is immediate that when $\psi$ is the standard basis $\psi_{std}:=\{e_0=1, e_1, e_2, e_3\}$  of $\mathbb H$, the two $\psi$-operators coincide with the classical Cauchy-Fueter operators. The classical properties of Cauchy-Fueter regular functions can be extended to $\psi$-regular functions, see \cite{shapiro2,shapiro1}. In particular, we recall that (using the notation in \eqref{psix}) there is a version of the Borel-Pompeiu formula, which is based on the $\psi$-hyperholomorphic  Cauchy kernel
 \[ K_{\psi}(\tau- x )=\frac{1}{2\pi^2} \frac{ \overline{\tau_{\psi} - x_{\psi}}}{|\tau_{\psi} - x_{\psi}|^4}.\]
Let $\partial \Omega$ be a 3-dimensional smooth surface then the quaternionic Borel-Pompieu formula is given by
\begin{align}\label{BPForm}  &  \int_{\partial \Omega }  ( K_{\psi}(\tau-x)\sigma_{\tau}^{\psi} f(\tau)  +  g(\tau)   \sigma_{\tau}^{\psi} K_{\psi}(\tau-x) ) \nonumber  \nonumber  \\
&  - \int_{\Omega} (K_{\psi} (y-x) {}^{\psi}\mathcal D [f] (y)  +  \mathcal D^{{\psi}} [g] (y)   K_{\psi} (y-x))dy   \nonumber \\
		=  & \left\{ \begin{array}{ll}  f(x) + g(x) , &  x\in \Omega,  \\ 0 , &  x\in \mathbb H\setminus\overline{\Omega}.
\end{array} \right.
\end{align}
for all $f,g \in C^1(\overline{\Omega}, \mathbb H).$

It is also useful to recall the differential and integral versions of Stokes formula for the $\psi$-hyperholomorphic functions theory, namely
\begin{align}\label{Stokes} d(g\sigma^{{\psi} }_x f) = & \left( g \ {}^{{\psi}}\mathcal  D[f]+ \  \mathcal D^{{\psi}}[g] f\right)dx, \nonumber \\
\int_{\partial \Omega} g\sigma^\psi_x f =  & \int_{\Omega } \left( g {}^\psi \mathcal  D[f] + \mathcal  D^{{\psi}}[g] f \right)dx,
\end{align}
where $d$ is the exterior differentiation operator, $dx$ is the differential form of the $4-$dimensional volume in $\mathbb R^4$; moreover
$$\sigma^{{\psi} }_{x}:=-{\rm sgn}(\psi) \left( \sum_{k=0}^3 (-1)^k \psi_k d\hat{x}_k\right)$$ is the quaternionic differential form of the $3-$dimensional volume in $\mathbb R^4$ related with $\psi$, where $d\hat{x}_i = dx_0 \wedge dx_1\wedge dx_2  \wedge  dx_3 $ means that the factor $dx_i$ is omitted and ${\rm sgn}(\psi)$ is defined to be $1$, or $-1$ according to the fact that $\psi$ and $\psi_{std}=\{e_0, e_1, e_2, e_3\}$ have the same orientation or not, respectively.

Note that, $|\sigma^{{\psi} }_{x}| = ds_3$ is the differential form of the 3-dimensional volume in $\mathbb R^4.$ We shall write $\sigma_x$ instead of $\sigma^{{\psi_{std}} }_{x}$, for short.

\subsection{Basics on the $(q,q')-$calculus}

We now review briefly basic concepts of the $(q,q')-$calculus. We begin with some notations and terminology and we refer the reader to \cite{NSP} for more information.

For $0 < q' < q \leq 1$, the $(q, q')-$number is defined by
$$[n]_{q,q'} =\displaystyle\frac{q^n - \{q'\}^n}{q - q'}= q^{n-1} + q^{n-2}q'  + \cdots +q \{q'\}^{n-2} + \{q'\}^{n-1}.$$
{It} is a natural generalization of the $q-$number (also called $q$-analogue of $n$) {which is defined as}
$$[n]_{q,1}:= [n]_{q}  =\displaystyle\frac{1 - q^n}{1 - q}= q^{n-1} + q^{n-2}  + \cdots +q  + 1,$$
for any number $n\in \mathbb N\cup\{0\}$. Note that $[n]_{q,q'}=[n]_{q',q}$.
\begin{definition}
The $(q,q')-$derivative of the function $f(x)$ is defined by
\begin{align}\label{$(q,q')-$derivative}
D_{q,q'}f(x) := \frac{f(qx) - f(q'x)}{(q - q')x}, \quad  x\neq 0
\end{align}
and $D_{q,q'}f(0)=f'(0)$, provided that $f$ is differentiable at 0. In addition, if $D_{q, q'}f$  is defined on a neighborhood of $0$ then we set
$${(D_{q, q'}f) (0)}:= \lim_{x\to 0} D_{q, q'}f (x),$$ if it exists.
\end{definition}

\begin{remark} We note that
\begin{align*}
\lim_{x\to 0} D_{q,q'}f (x) = &\lim_{x\to 0 } \frac{f(qx) - f(q'x)}{qx-q'x} =\\
= &\lim_{x\to 0 } \frac{ q}{ q-q'}\frac{f(qx) -f(0) }{qx} - \lim_{x\to 0} \frac{q'}{q-q'} \frac{f(0) - f(q'x) }{-q'x}\\
= & \frac{q}{q-q'}f'(0)- \frac{q'}{q-q'} f'(0)= f'(0).
\end{align*}
{This} is why in general  if $f$ and $D_{q,q'}f$  are defined on a neighborhood of $0$ we set
${(D_{q, q'}f) (0)}:= \lim_{x\to 0} D_{q, q'}f (x),$ if it exists.  A similar definition can be given for $D_{q, q',r}f(0)$ when $f$
 is quaternion-differentiable on the left at $0$.
\end{remark}
As with the $q-$derivative and the ordinary derivative, the action of applying the $(q, q')-$derivative of a function is a linear
operator.

\subsection{On a quaternionic $(q,q')-$derivative}

{Let $0<q'<q\leq 1$.} We now introduce a natural quaternionic generalization of the $(q,q')-$derivative operator (\ref{$(q,q')-$derivative}) acting on  quaternionic valued functions defined over domains of $\mathbb R^4$.
\begin{definition}
Let $\Omega\subset \mathbb H$ be a domain. We will denote by $\mathcal A_{q,q'}(\Omega)$ the set of functions $f\in C^1(\Omega, \mathbb H)$ such that
$$(D_{q,q'} f)(x) := \left[f(qx)- f(q'x) \right] [(q-q') x]^{-1}, \quad \forall  x\in \Omega$$
exists.

Let $\mathcal A_{q,q',r}(\Omega)$ denote the set of $f\in C^1(\Omega, \mathbb H)$ such that
$$(D_{q,q',r} f)(x) :=  [ (q-q') x]^{-1} \left[f(qx)- f(q'x) \right], \quad \forall  x\in \Omega$$
exists.
\end{definition}
\begin{remark}
It is immediate that the operators $D_{q,q'}$, $D_{q,q',r}$, are right and left linear, respectively.
\end{remark}

From now on, we consider $0<q'<q\leq 1$ and  $0<\mathfrak{q}'<\mathfrak{q} \leq 1$.
\begin{proposition}\label{pro14} Suppose $f\in \mathcal A_{q,q'}(\Omega)$, $g\in \mathcal A_{\mathfrak{q}, \mathfrak{q}',r}(\Omega)$ and let ${\bf B}, {\bf C} : \Omega\to \mathbb H\cong \mathbb R^4$ be two conservative vector fields such that ${\bf B} f = D_{q,q'} f - {}^{\psi}\mathcal D [f ]$  and $g{\bf C} = D_{\mathfrak{q},\mathfrak{q}',r} g -  \mathcal D^{\psi} [g]$ on $\Omega.$ Then there exist  $\lambda,\eta \in C^1(\Omega, \mathbb R)$  such that
\begin{align*}
{}^{\psi}\mathcal D [f e^{\lambda}] = (D_{q,q'}f) e^{\lambda}, \quad  \textrm{and}\quad
\mathcal D^{\psi} [g e^{\eta}] = (D_{\mathfrak{q},\mathfrak{q}',r} g) e^{\eta},
\end{align*}
on $\Omega$.
\end{proposition}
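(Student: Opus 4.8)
The plan is to treat the two identities separately, since they are mirror images of each other under the left/right symmetry, and concentrate on the first one; the second follows verbatim by swapping $(q,q')$ for $(\mathfrak q,\mathfrak q')$ and left multiplication for right multiplication. Writing $e^\lambda$ with $\lambda\in C^1(\Omega,\mathbb R)$ scalar-valued, the key observation is the product rule for the $\psi$-Fueter operator applied to the product of a quaternionic function $f$ with a scalar function: since each $\partial_k$ is an ordinary real partial derivative and $e^\lambda$ is real-valued (hence commutes with the $\psi_k$), one gets
\begin{align*}
{}^{\psi}\mathcal D[f e^{\lambda}] = \sum_{k=0}^3 \psi_k \partial_k(f e^{\lambda}) = \left(\sum_{k=0}^3 \psi_k \partial_k f\right) e^{\lambda} + \sum_{k=0}^3 \psi_k f\, (\partial_k \lambda)\, e^{\lambda} = \left({}^{\psi}\mathcal D[f] + \left(\sum_{k=0}^3 \psi_k \partial_k\lambda\right) f\right) e^{\lambda},
\end{align*}
using that $f(\partial_k\lambda)e^\lambda = (\partial_k\lambda) f e^\lambda$ because $\partial_k\lambda$ is a real scalar. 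So the requirement ${}^{\psi}\mathcal D[f e^\lambda] = (D_{q,q'}f)e^\lambda$ is equivalent, after cancelling the nowhere-vanishing factor $e^\lambda$, to
\begin{align*}
\left(\sum_{k=0}^3 \psi_k \partial_k\lambda\right) f = D_{q,q'}f - {}^{\psi}\mathcal D[f] = {\bf B} f,
\end{align*}
by the hypothesis on ${\bf B}$. Since this must hold as an identity of quaternionic functions and $f$ is a common right factor, it suffices to produce $\lambda$ with ${}^{\psi}\mathcal D[\lambda] = \sum_{k=0}^3 \psi_k\partial_k\lambda = {\bf B}$; note here that for scalar $\lambda$ the operator ${}^{\psi}\mathcal D$ is exactly ``$\psi$-dot-gradient'', identifying $\mathbb H\cong\mathbb R^4$ via the structural set $\psi$.

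Thus everything reduces to a potential-existence statement: given the vector field ${\bf B}:\Omega\to\mathbb R^4$, find a scalar $\lambda$ whose $\psi$-gradient equals ${\bf B}$. Writing ${\bf B} = \sum_k B_k\psi_k$ in the $\psi$-coordinates, this is the classical system $\partial_k\lambda = B_k$ for $k=0,1,2,3$, which is solvable precisely when the field is conservative (equivalently, closed with a primitive, which on a domain is what ``conservative'' is taken to mean here) — and this is exactly the hypothesis placed on ${\bf B}$. One then sets $\lambda$ to be such a potential, defined up to an additive constant, and $\lambda\in C^1(\Omega,\mathbb R)$ because $f\in C^1$ forces ${}^{\psi}\mathcal D[f]$ and hence ${\bf B} f$, and thus ${\bf B}$ wherever $f\neq 0$, to be continuous; a small remark may be needed to cover points where $f$ vanishes, but the cleanest route is simply to read ``${\bf B}$ conservative'' as including ${\bf B}\in C(\Omega,\mathbb R^4)$ with a $C^1$ potential, which is how the statement is set up. The construction of $\eta$ is identical: the right product rule gives $\mathcal D^\psi[g e^\eta] = (\mathcal D^\psi[g] + (\sum_k \partial_k\eta\,\psi_k)g)e^\eta$ — wait, one must be slightly careful with the order, but since $\eta$ is scalar $g e^\eta$ and $e^\eta g$ agree and $\sum_k(\partial_k\eta)\psi_k$ acting on the right reproduces $g{\bf C}$ — so that $\eta$ is a scalar potential for ${\bf C}$, which exists since ${\bf C}$ is conservative.

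The only genuine point requiring care — the ``hard part'', such as it is — is the bookkeeping around the side on which the scalar potential's gradient is inserted: in the left identity one needs $(\sum_k\psi_k\partial_k\lambda)f$ to match ${\bf B}f$, and in the right identity one needs $g(\sum_k(\partial_k\eta)\psi_k)$ to match $g{\bf C}$, and one should check that ``conservative'' for ${\bf B}$ and for ${\bf C}$ is being used consistently with these two insertions (the definition of the $\psi$-Fueter operators already fixes which side the $\psi_k$ sit on, so this is really just a matter of matching the hypotheses to the two computations). Beyond that, the proof is a direct application of the Leibniz rule for real partial derivatives together with the definition of ``conservative vector field''; no compactness, no convergence, no deformation-theoretic input is needed.
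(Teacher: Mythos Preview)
Your proposal is correct and follows essentially the same route as the paper: apply the Leibniz rule to ${}^{\psi}\mathcal D[f e^{\lambda}]$ (using that $e^{\lambda}$ is real-valued and hence central), reduce to the requirement ${}^{\psi}\mathcal D[\lambda]={\bf B}$, and invoke the conservative hypothesis to produce the scalar potential $\lambda$; the paper does exactly this, only in the reverse order (first choose $\lambda$ with ${}^{\psi}\mathcal D[\lambda]={\bf B}$, then verify the product rule). Your extra caveats about zeros of $f$ and the side on which the gradient appears are unnecessary here, and the informal ``wait'' aside should be cleaned up, but mathematically nothing is missing.
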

\begin{proof} As ${\bf B}$ is conservative, there exists $\lambda \in C^1(\Omega, \mathbb R)$ such that ${\bf B} = {}^{\psi}\mathcal D [\lambda]$  on $\Omega$. Then
\begin{align*}
{}^{\psi}\mathcal D [f e^{\lambda}]  = & \left\{  {}^{\psi}\mathcal D [f]  +  {\bf B}  f \right\}  e^{\lambda}\\
 = & \left\{  {}^{\psi}\mathcal D [f]  +  {}^{\psi}\mathcal D [   \lambda ]  f \right\}  e^{\lambda}\\
= & \left\{  {}^{\psi}\mathcal D [f] +  \left(  (D_{q,q'} f) - {}^{\psi}\mathcal D [f ] \right)   \right\}  e^{\lambda}  .
\end{align*}
The second identity has a similar proof.
\end{proof}
From now on, we let $\Lambda\subset\Omega\subset\mathbb H$ be a domain such that $\partial \Lambda \subset\Omega$ is a $3-$dimensional smooth surface.
\begin{proposition} \label{pro15} Let $f\in \mathcal A_{q,q'}(\Omega) $, $g\in \mathcal A_{\mathfrak{q}, \mathfrak{q}',r}(\Omega)$
 and $\lambda, \eta  \in C^1(\Omega, \mathbb R)$ be as in Proposition \ref{pro14}. Then
\begin{equation}\label{diesis}\begin{split} d(g \ \nu^{{ \psi, \eta,\lambda} }_x \  f ) = & \left[  (D_{\mathfrak{q} ,\mathfrak{q}',r} g) f +  g  (D_{q,q'} f)  \right] dx_{\eta, \lambda},\\
		\int_{\partial \Lambda}  g  \ \nu^{{ \psi, \eta,\lambda} }_x \  f =  &  \int_{\Lambda }
\left[  (D_{\mathfrak{q} ,\mathfrak{q}',r} g) f +  g  (D_{q,q'} f)  \right] dx_{\eta, \lambda},
\end{split}
\end{equation}
where  $\nu^{{\psi, \eta,\lambda} }_x : = e^{\eta + \lambda}\sigma^{{\psi} }_x$ and  $ dx_{\eta, \lambda} :=  e^{\eta+ \lambda} dx$. On the other hand,
\begin{equation}\label{2diesis}	\begin{split}  &  \int_{\partial \Lambda } \left(g  (\tau) \sigma_{\tau}^{\psi}  K_{\psi}(\tau-x)  e^{ \eta (\tau)- \eta(x)}  + K_{\psi}(\tau-x)  e^{ \lambda  (\tau)- \lambda(x)}\sigma_{\tau}^{\psi} f  (\tau)  \right) \\
&- \int_{\Lambda} \left((D_{\mathfrak{q},\mathfrak{q}',r} g) (y) K_{\psi} (y-x) e^{ \eta  (y)- \eta(x)}
+  K_{\psi} (y-x) e^{ \lambda  (y)- \lambda(x)} (D_{q,q'}f)(y) \right) dy   \nonumber \\
		=  &  \left\{ \begin{array}{ll}  g(x) +  f(x), & x\in \Lambda,  \\ 0 , &  x\in   \Omega\setminus \overline{\Lambda}.
\end{array} \right.
\end{split}
\end{equation}
\end{proposition}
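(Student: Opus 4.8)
The plan is to deduce both claims from the already-established classical Stokes formula \eqref{Stokes} and Borel--Pompeiu formula \eqref{BPForm}, using repeatedly the fact that $e^{\lambda}$ and $e^{\eta}$ are $\mathbb{R}$-valued, hence central in $\mathbb{H}$, and that the evaluation point $x$ is fixed, so that $e^{-\lambda(x)}$ and $e^{-\eta(x)}$ are constants.

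First I would prove \eqref{diesis}. Since $e^{\eta+\lambda}=e^{\eta}e^{\lambda}$ is a scalar function, we can write $g\,\nu^{\psi,\eta,\lambda}_x\,f = (g e^{\eta})\,\sigma^{\psi}_x\,(f e^{\lambda})$, where $g e^{\eta},\, f e^{\lambda}\in C^{1}(\Omega,\mathbb{H})$. Applying the differential Stokes formula \eqref{Stokes} with $g$ replaced by $g e^{\eta}$ and $f$ replaced by $f e^{\lambda}$ gives
\[
d\bigl((g e^{\eta})\sigma^{\psi}_x(f e^{\lambda})\bigr)=\Bigl((g e^{\eta})\,{}^{\psi}\mathcal D[f e^{\lambda}]+\mathcal D^{\psi}[g e^{\eta}]\,(f e^{\lambda})\Bigr)dx .
\]
By Proposition \ref{pro14}, ${}^{\psi}\mathcal D[f e^{\lambda}]=(D_{q,q'}f)e^{\lambda}$ and $\mathcal D^{\psi}[g e^{\eta}]=(D_{\mathfrak q,\mathfrak q',r}g)e^{\eta}$; substituting these and pulling the central factor $e^{\eta+\lambda}$ out of the products yields $\bigl[(D_{\mathfrak q,\mathfrak q',r}g)f+g(D_{q,q'}f)\bigr]dx_{\eta,\lambda}$, which is the first identity of \eqref{diesis}. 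The integral version then follows by integrating this $4$-form identity over $\Lambda$ and applying the ordinary Stokes theorem $\int_{\partial\Lambda}\omega=\int_{\Lambda}d\omega$ componentwise to the $\mathbb{H}$-valued $3$-form $\omega=g\,\nu^{\psi,\eta,\lambda}_x\,f$ on $\overline{\Lambda}\subset\Omega$.

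For \eqref{2diesis} I would apply the classical Borel--Pompeiu formula \eqref{BPForm} on the domain $\Lambda$ twice; this is legitimate because $\overline{\Lambda}=\Lambda\cup\partial\Lambda\subset\Omega$, so $f e^{\lambda},\, g e^{\eta}\in C^{1}(\overline{\Lambda},\mathbb{H})$, and for $x\in\Omega\setminus\overline{\Lambda}$ one has $x\notin\overline{\Lambda}$. In the first application take the pair $(f e^{\lambda},0)$: using Proposition \ref{pro14} to rewrite the volume term as $\int_{\Lambda}K_{\psi}(y-x)(D_{q,q'}f)(y)e^{\lambda(y)}dy$, and then multiplying the whole identity by the constant $e^{-\lambda(x)}$, every factor $e^{\lambda(\cdot)}$ becomes $e^{\lambda(\cdot)-\lambda(x)}$ and the right-hand side becomes $f(x)$ for $x\in\Lambda$ and $0$ for $x\in\Omega\setminus\overline{\Lambda}$. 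In the second application take the pair $(0,g e^{\eta})$, rewrite the volume term via Proposition \ref{pro14} as $\int_{\Lambda}(D_{\mathfrak q,\mathfrak q',r}g)(y)e^{\eta(y)}K_{\psi}(y-x)dy$, and multiply by $e^{-\eta(x)}$ to obtain the companion identity with the factors $e^{\eta(\cdot)-\eta(x)}$. Adding the two resulting identities and reordering the (central) exponential factors inside each integrand gives exactly \eqref{2diesis}.

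The argument is essentially bookkeeping, and I expect no genuine obstacle; the only points requiring a little care are (i) checking that $f e^{\lambda}$ and $g e^{\eta}$ are admissible inputs for \eqref{Stokes} and \eqref{BPForm} on $\overline{\Lambda}$, (ii) moving the scalar factors $e^{\lambda},e^{\eta}$ freely across quaternionic products and out of the boundary and volume integrals, and (iii) matching the order of the four terms in the sum of the two Borel--Pompeiu identities to the order displayed in \eqref{2diesis}.
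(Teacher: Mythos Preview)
Your proposal is correct and follows essentially the same route as the paper's own proof: apply \eqref{Stokes} to the pair $(g e^{\eta}, f e^{\lambda})$ and invoke Proposition~\ref{pro14} for \eqref{diesis}, then apply \eqref{BPForm} to $f e^{\lambda}$ and $g e^{\eta}$ and again use Proposition~\ref{pro14} for \eqref{2diesis}. The paper's proof is extremely terse and leaves implicit exactly the bookkeeping you spell out---the centrality of $e^{\lambda}, e^{\eta}$, the separate multiplication by the constants $e^{-\lambda(x)}$ and $e^{-\eta(x)}$, and the summation of the two resulting identities---so your version is in fact a more complete rendering of the same argument.
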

\begin{proof}
Formulas in \eqref{diesis} follow immediately by using \eqref{Stokes} and  Proposition \ref{pro14}.
To prove \eqref{2diesis} we apply \eqref{BPForm} to the functions  $g e^{\eta} $ and  $fe^{\lambda}$ and use Proposition \ref{pro14}.
\end{proof}
An immediate consequence is the following:
\begin{corollary} \label{cor16}
Let $f\in \mathcal A_{q,q'}(\Omega) $, $g\in \mathcal A_{\mathfrak{q}, \mathfrak{q}',r}(\Omega)$ and $\lambda, \eta  \in C^1(\Omega, \mathbb R)$ be as in Proposition \ref{pro14}. If $D_{\mathfrak{q},\mathfrak{q}', r} g =0 = D_{q,q'} f  $ on $\Omega$ then
\begin{align*}  		
\int_{\partial \Lambda} g \ \nu^{{\psi, \eta,\lambda} }_x \ f =  &  0,
\end{align*}
and
\begin{align*}
&  \int_{\partial \Lambda } \left(g (\tau)  \sigma_{\tau}^{\psi}  K_{\psi}(\tau-x)  e^{ \eta (\tau)- \eta(x)}  +
 K_{\psi}(\tau-x)  e^{ \lambda  (\tau)- \lambda(x)}\sigma_{\tau}^{\psi} f  (\tau)  \right)    \\
=  &   \left\{ \begin{array}{ll}  g(x) +  f(x), &	x\in \Lambda,  \\ 0 , &  x\in \Omega\setminus \overline{\Lambda}.
\end{array} \right.
\end{align*}
\end{corollary}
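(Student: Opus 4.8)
The plan is to obtain the statement as a direct specialization of Proposition \ref{pro15} to the case in which both deformed derivatives vanish; no new machinery is needed.

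First I would invoke the integral identity in \eqref{diesis}. Since $f\in\mathcal A_{q,q'}(\Omega)$, $g\in\mathcal A_{\mathfrak{q},\mathfrak{q}',r}(\Omega)$ and $\lambda,\eta\in C^1(\Omega,\mathbb R)$ are exactly the data furnished by Proposition \ref{pro14}, the hypotheses of Proposition \ref{pro15} are in force, so
$$\int_{\partial\Lambda} g\,\nu^{\psi,\eta,\lambda}_x\,f=\int_{\Lambda}\bigl[(D_{\mathfrak{q},\mathfrak{q}',r}g)\,f+g\,(D_{q,q'}f)\bigr]\,dx_{\eta,\lambda}.$$
Under the standing assumption $D_{\mathfrak{q},\mathfrak{q}',r}g=0=D_{q,q'}f$ on $\Omega$, hence in particular on $\Lambda$, the integrand on the right-hand side is identically zero, so the left-hand side vanishes. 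This yields the first assertion.

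For the second assertion I would apply the Borel-Pompeiu-type identity \eqref{2diesis} of Proposition \ref{pro15} to the same $f,g,\lambda,\eta$. That identity expresses a certain boundary integral over $\partial\Lambda$ minus the volume integral
$$\int_{\Lambda}\Bigl((D_{\mathfrak{q},\mathfrak{q}',r}g)(y)\,K_{\psi}(y-x)\,e^{\eta(y)-\eta(x)}+K_{\psi}(y-x)\,e^{\lambda(y)-\lambda(x)}\,(D_{q,q'}f)(y)\Bigr)\,dy$$
as being equal to $g(x)+f(x)$ for $x\in\Lambda$ and to $0$ for $x\in\Omega\setminus\overline{\Lambda}$. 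Again the hypothesis $D_{\mathfrak{q},\mathfrak{q}',r}g=0=D_{q,q'}f$ forces this volume integral to vanish for every admissible $x$, so \eqref{2diesis} collapses precisely to the stated equality for the boundary integral.

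There is essentially no obstacle: the argument is a one-line substitution into Proposition \ref{pro15}, and the only point to verify is that the hypotheses inherited from Propositions \ref{pro14} and \ref{pro15} are still satisfied, which holds by assumption. If desired, I would add the conceptual remark that, by Proposition \ref{pro14}, $D_{q,q'}f=0$ means $fe^{\lambda}$ is left $\psi$-hyperholomorphic and $D_{\mathfrak{q},\mathfrak{q}',r}g=0$ means $ge^{\eta}$ is right $\psi$-hyperholomorphic, which explains why the volume contributions disappear; but this observation is not required for the formal proof.
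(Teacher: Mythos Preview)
Your proposal is correct and follows exactly the approach intended by the paper: the corollary is stated there as ``an immediate consequence'' of Proposition~\ref{pro15}, and your argument---substituting the vanishing conditions $D_{\mathfrak{q},\mathfrak{q}',r}g=0=D_{q,q'}f$ into \eqref{diesis} and \eqref{2diesis} so that the volume integrals drop out---is precisely that immediate consequence.
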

\begin{remark}
Let $Z(x)= \bar x$ for all $x\in \mathbb H$ be the quaternionic conjugation mapping. If $\Omega$ is $Z-$invariant ($Z(\Omega)=\Omega$) and  $f\in \mathcal A_{q,q'}(\Omega)$, direct computations give
\begin{align*}
\overline{ (D_{q,q',r} ( Z\circ f \circ Z ) ) (x) } = & \overline{[(q-q')x]^{-1} \left[ Z\circ f \circ Z(qx)- Z\circ f \circ Z(q'x) \right] } \\
= & \overline{ \left[ Z\circ f \circ Z(qx)- Z\circ f \circ Z(q' x) \right] } \ \overline{[(q-q')x]^{-1} } \\
= & \left[ f \circ Z(qx)- f \circ Z(q' x) \right] [(q-q')\bar x]^{-1} \\
= & \left[ f (q\bar x)- f (q' \bar x) \right] [(q-q')\bar x]^{-1} \\
= & D_{q,q'} (f) (\bar x) = D_{q,q'} (f) (Z(x));
\end{align*}
thus
$$\overline{(D_{q,q',r}  ( Z\circ f \circ  Z ) ) (x)} = (D_{q,q'} f) (Z(x) ),$$
for all $x\in \Omega$.
\end{remark}

\section{The $(q,q')-$deformed quaternionic analysis}
Let us start by considering the $\psi-$Fueter operator defined earlier and replace the continuous partial derivatives by certain $(q,q')-$deformed partial derivatives. This process produces a $(q,q')-$deformed $\psi-$Fueter operator which, motivated by \cite{Dem}, can be defined as follows:
\begin{definition}
Let $0<q'<q\leq 1$.  For $f\in C^1(\Omega, \mathbb H)$ we define the $(q,q')-$deformed $\psi-$Fueter operator by
\begin{align*}
({}^{\psi} {\partial}_{{q},{q}'} f )(w,x):= & \sum_{k=0}^3 \psi _k (\partial^{q ,q'}_{x_k} f)(w_0, \dots,   x_k, \dots ,w_3),\  w,x \in \Omega,
\end{align*}
if
\begin{align*}
  (\partial^{q ,q'}_{x_k}f )(w_0, \dots, x_k, \dots ,w_3) := & \frac{f(w_0, \dots, q  x_k, \dots ,w_3) - f(w_0, \dots, q' x_k, \dots ,w_3) }{(q -q') x_k }.
\end{align*}
there exist for  $k=0,1,2,3$, which are called  $(q,q')-$deformed partial derivatives.

Similarly,
\begin{align*}
({}^{\psi} {\partial}_{{q},{q}', r} f )(w,x):= & \sum_{k=0}^3  (\partial^{q ,q'}_{x_k} f)(w_0, \dots,   x_k, \dots ,w_3) \psi _k ,\  w,x \in \Omega.
\end{align*}
 If $x_k=0$ the real components of $(\partial^{q ,q'}_{x_k}f )(w_0, \dots, x_k, \dots ,w_3) $ are defined so, as a consequence, $ (\partial^{q ,q'}_{x_k}f )(w_0, \dots, x_k, \dots ,w_3) $ is defined too.
\end{definition}
Note that if $x_k = 0$ in $(\partial _{\vec q,  \{\vec q\}'} f) (w,x) $  or in  $(\partial _{\vec q,  \{\vec q\}'  , r}  f) (w,x)  $ then the partial derivatives of the real components are already defined at $x_k=0$.\\
Here and in the sequel, we write $(x,w)=(w_0, \dots, x_k, \dots ,w_3)$ instead of $(x)=(x_0, \dots, x_k, \dots ,x_3) $ when all but the variable of interest are held fixed during the differentiation.

\begin{remark} The operator $(\partial^{q ,q'}_{x_k}$ is quaternionic right linear while ${}^{\psi} {\partial}_{{q},{q}', r} $
is quaternionic left linear.
\end{remark}
\begin{remark}\label{Remark12}
In particular, when $w_k=x_k$ for $k=0,1,2,3$ we have
$$(\partial^{q ,q'}_{x_k} f )(w_0, \dots,   x_k, \dots ,w_3)  =  (\partial^{q ,q'}_{x_k}f)(x),$$
so that
$$({}^{\psi} {\partial}_{{ q},{  q}'} f )(x,x)=  \sum_{k=0}^3 \psi _k (\partial^{q ,q'}_{x_k}f)(x), \ \ \textrm{and} \ \
    ({}^{\psi} {\partial}_{{q},{q}', r} f )(x,x) =  \sum_{k=0}^3  (\partial^{q ,q'}_{x_k}f)(x) \psi _k.$$
Moreover, when  $q=1$ we obtain
$$\lim_{q'\to 1^{-}}({}^{\psi} {\partial}_{{1},{q}'}f)(x,x)=\sum_{k=0}^3 \psi _k (\partial _{x_k} f)(x) ={}^{{\psi}}\mathcal D[f](x).$$
$$\lim_{q'\to 1^{-}}({}^{\psi} {\partial}_{{1},{q}', r }f)(x,x)=\sum_{k=0}^3  (\partial _{x_k} f)(x) \psi _k =
\mathcal D  ^ {\psi}  [f](x).$$
Hence, an specific case $({}^{\psi} {\partial}_{{q},{q}'}f)(x,x)$ and $({}^{\psi} {\partial}_{{q},{q}',r}f)(x,x)$ reduces to the standard $\psi-$Fueter operators.
\end{remark}
\begin{remark}
 When $q'=1$, our operator ${}^{\psi} {\partial}_{{q},{q}'}$ is the quaternionic version of the operator studied in \cite{zimmermann2022steps} in the Clifford analysis setting. In particular, ${}^{\psi} {\partial}_{{q},{q}'}$ satisfies the counterpart of axioms (A1) and (A3) where ${}^{\psi} {\partial}_{{q},{q}'}(x)$ and $({}^{\psi} {\partial}_{{q},{q}'})^2$, respectively, are computed. In our case we have:
$$
{}^{\psi} {\partial}_{{q},{q}'}(x)= \sum_{k=0}^3 \psi _k \partial^{q,q'}_{x_k} (x)= -2.
$$
Moreover, one may verify that
$
({}^{\psi} {\partial}_{{q},{q}'})^2f(x)
$
is scalar, in fact $\partial^{q,q'}_{x_k}\partial^{q,q'}_{x_\ell}=\partial^{q,q'}_{x_\ell}\partial^{q,q'}_{x_k}$ and these derivatives have coefficient $\psi_k\psi_\ell+\psi_ell\psi_k=0$ while the second derivatives
$\partial^{q,q'}_{x_k}\partial^{q,q'}_{x_k}$ have a scalar coefficient.
Thus, $({}^{\psi} {\partial}_{{q},{q}'})^2$ is the analogue of the Laplace operator. Finally, we observe that the deformed Fueter variables $(x_k-\psi_k x_0)$, $k=0,\ldots, 3$ are in the kernel of ${}^{\psi} {\partial}_{{q},{q}'}$.
\end{remark}
In the sequel, we shall use the notation ${\vec q}=(q_0,q_1,q_2,q_3)$ and ${\{\vec q}\}'=(q_0',q_1',q_2',q_3')$ such that $0<q_k'<q_k \leq  1$ for all $k=0,1,2,3$.

Let now introduce a multiple, i.e. a multi-variable, $(q,q')-$deformed $\psi-$Fueter operator.
\begin{definition}\label{Def1234}
Let $f\in C^1(\Omega, \mathbb H)$. A multiple $(q,q')-$deformed $\psi-$Fueter operator on $f$, denoted by ${}^{\psi} {\partial}_{{\vec q},{\{\vec q}\}'}f$, if it exists, is defined as follows
\begin{align*}
({}^{\psi} {\partial}_{{\vec q},{\{\vec q}\}'}f)(w,x):= & \sum_{k=0}^3 \psi_k(\partial^{q_k,q_k'}_{x_k} f) (w_0, \dots, x_k,\dots, w_3),
\end{align*}
and a right  multiple $(q,q')-$deformed $\psi-$Fueter operator on $f$,  if it exists, is defined by
\begin{align*}
({}^{\psi} {\partial}_{{\vec q},{\{\vec q}\}' , r }f)(w,x):= & \sum_{k=0}^3(\partial^{q_k,q_k'}_{x_k} f) (w_0, \dots, x_k,\dots, w_3)  \psi_k,
\end{align*}
on $\Omega$.

In particular, when $w=x$ we have
\begin{align*}
(\partial^{q_k,q'_k}_{x_k} f )(w_0, \dots, x_k, \dots ,w_3) = (\partial^{q_k,q'_k}_{x_k} f)(x)
\end{align*}
and
\begin{align*}
({}^{\psi} {\partial}_{{\vec q},{\{\vec q}\}'} f )(x,x) = \sum_{k=0}^3 \psi_k (\partial^{q_k,q_k'}_{x_k} f)(x)=:({}^{\psi} {\partial}_{{\vec q},{\{\vec q}\}'}f)(x)
\end{align*}
on $\Omega$ and the same for ${}^{\psi} {\partial}_{{\vec q},{\{\vec q}\}', r}$.
\end{definition}
\begin{remark}
We note that for $q_k = q$ and $q_k'=q'$ for all $k$ we get
\begin{align*}
(\partial^{q_k,q'_k}_{x_k} f )(w_0, \dots,   x_k, \dots ,w_3) = & (\partial^{q ,q' }_{x_k} f )(w_0, \dots, x_k, \dots ,w_3), \\
 ({}^{\psi} {\partial}_{{\vec q},{\{\vec q}\}'} f )(w,x)= & ({}^{\psi} {\partial}_{ q , q '} f )(w,x) ,\\
 ({}^{\psi} {\partial}_{{\vec q},{\{\vec q}\}' , r } f )(w,x)= & ({}^{\psi} {\partial}_{ q , q ', r } f )(w,x) ,
\end{align*}
for all $x\in \Omega$.
\end{remark}
\begin{definition}
Let us denote by ${\bf A}_{{\vec q},{\{\vec q}\}' }$ (resp. ${\bf A}_{ {\vec q},{\{\vec q}\}',r }$) the set of $f\in C^1(\Omega, \mathbb H)$ such that $$({}^{\psi} {\partial }_{{\vec q},{\{\vec q}\}'} f )(w,x)\qquad \qquad ({\it resp.}({}^{\psi} {\partial }_{{\vec q},{\{\vec q}\}', r}f)(w,x))$$ exists for all $x\in \Omega$.
\end{definition}
\begin{proposition}\label{pro2}  Let $f\in  {\bf A}_{ {\vec q},{\{\vec q}\}'  }$ be such that there exist differentiable real-valued functions  $\lambda_k = \lambda_k(w_0,\dots, x_k, \dots, w_3)$ on $\Omega$ for $k=0,1,2,3$ such that
 \begin{align*}
&  \frac{\partial     \lambda_k(w_0,\dots, x_k, \dots, w_3)   }{\partial x_k} f(w_0,\dots, x_k, \dots, w_3)  \\
= & \left[(\partial^{q_k, q_k'}_{x_k} f )(w_0, \dots,   x_k, \dots ,w_3)  - \frac{\partial    f(w_0,\dots, x_k, \dots, w_3)   }{\partial x_k}  \right]
 , \end{align*}
for all  $x\in \Omega$. Then
\begin{align*}
& {}^{\psi}\mathcal D [ \sum _{k=0}^3 f(w_0,\dots, x_k, \dots, w_3)  e^{\lambda_k (w_0,\dots, x_k, \dots, w_3)}]  \\
=  & ({}^{\psi} {\partial}_{{\vec q},{\{\vec q}\}' } f )(w,x)    \sum _{k=0}^3     e^{\lambda_k (w_0,\dots, x_k, \dots, w_3)}  \\
 &   -
  \sum _{ k,j=0, k\neq j }^3  \psi_k     (\partial^{q_k, q_k'}_{x_k} f )(w_0, \dots,   x_k, \dots ,w_3)    e^{\lambda_j (w_0,\dots, x_j, \dots, w_3)},  \quad \forall  x\in \Omega,
\end{align*}
where the partial derivations in ${}^{\psi}\mathcal D$ are with respect to the real components of $x$ in the basis $\psi$.
\end{proposition}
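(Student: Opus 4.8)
The plan is to compute \({}^{\psi}\mathcal D\) applied to the sum \(\sum_{k=0}^3 f(w_0,\dots,x_k,\dots,w_3)\,e^{\lambda_k(w_0,\dots,x_k,\dots,w_3)}\) directly, term by term, and then to reorganize the outcome into the stated form. The crucial observation is that each summand, viewed as a function of the four real coordinates \(x_0,x_1,x_2,x_3\) of \(x\) in the basis \(\psi\), depends \emph{only} on the single coordinate \(x_k\), since every other slot is frozen at the constant \(w_j\). Hence, when expanding \({}^{\psi}\mathcal D=\sum_{m=0}^3\psi_m\partial_m\), all mixed contributions \(\partial_m[f\,e^{\lambda_k}]\) with \(m\neq k\) vanish, and one is left with
\[
{}^{\psi}\mathcal D\Big[\sum_{k=0}^3 f(w_0,\dots,x_k,\dots,w_3)\,e^{\lambda_k(w_0,\dots,x_k,\dots,w_3)}\Big]
=\sum_{k=0}^3 \psi_k\,\partial_{x_k}\big[f(w_0,\dots,x_k,\dots,w_3)\,e^{\lambda_k(w_0,\dots,x_k,\dots,w_3)}\big].
\]

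Next I would apply the ordinary one-variable Leibniz rule in \(x_k\) to each summand (legitimate since \(f\in C^1(\Omega,\mathbb H)\) and \(\lambda_k\) is differentiable), obtaining \(\partial_{x_k}[f\,e^{\lambda_k}]=\big(\partial_{x_k}f+(\partial_{x_k}\lambda_k)\,f\big)e^{\lambda_k}\), and then invoke the hypothesis on \(\lambda_k\), namely \((\partial_{x_k}\lambda_k)\,f=(\partial^{q_k,q_k'}_{x_k}f)-\partial_{x_k}f\), all evaluated at \((w_0,\dots,x_k,\dots,w_3)\). The two \(\partial_{x_k}f\) terms cancel, and one arrives at the intermediate identity
\[
{}^{\psi}\mathcal D\Big[\sum_{k=0}^3 f\,e^{\lambda_k}\Big]
=\sum_{k=0}^3 \psi_k\,(\partial^{q_k,q_k'}_{x_k}f)(w_0,\dots,x_k,\dots,w_3)\,e^{\lambda_k(w_0,\dots,x_k,\dots,w_3)}.
\]

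Finally, I would recognize this expression as the ``diagonal'' part of the product \(\big(\sum_{k=0}^3\psi_k(\partial^{q_k,q_k'}_{x_k}f)\big)\big(\sum_{j=0}^3 e^{\lambda_j}\big)=({}^{\psi}\partial_{\vec q,\{\vec q\}'}f)(w,x)\sum_{j=0}^3 e^{\lambda_j}\), using Definition \ref{Def1234} for the first factor. Concretely, substitute \(e^{\lambda_k}=\sum_{j=0}^3 e^{\lambda_j}-\sum_{j\neq k}e^{\lambda_j}\) in each term; splitting the resulting double sum into the parts \(k=j\) and \(k\neq j\) yields exactly the claimed formula. The argument carries no genuine obstacle — it is pure bookkeeping — and the only points requiring care are: (i) correctly tracking that each frozen-variable function depends on just one coordinate of \(x\), so that the off-diagonal partials drop out; and (ii) making sure that in the correction term the subscript on \(\psi\) and the superscript \((q_k,q_k')\) belong to the \emph{differentiated} index \(k\), while the exponent \(e^{\lambda_j}\) ranges over the complementary indices \(j\neq k\).
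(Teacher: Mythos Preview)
Your proposal is correct and follows essentially the same route as the paper's proof: compute ${}^{\psi}\mathcal D$ termwise, note that only the diagonal partial $\partial_{x_k}$ survives on the $k$-th summand, apply the Leibniz rule together with the hypothesis to reach $\sum_{k=0}^3\psi_k(\partial^{q_k,q_k'}_{x_k}f)e^{\lambda_k}$, and then rewrite this diagonal sum as the full product minus the off-diagonal correction. Your write-up is in fact slightly more explicit than the paper's about why the off-diagonal partials vanish and about the final algebraic splitting, but the argument is the same.
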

\begin{proof}
The statement follows from the following computations:
\begin{align*}
&{}^{\psi}\mathcal D [ \sum _{k=0}^3 f(w_0,\dots, x_k, \dots, w_3)  e^{\lambda_k (w_0,\dots, x_k, \dots, w_3)}] \\ = &
\sum _{k=0}^3  \psi_k \frac{\partial    }{\partial x_k}   \left( f(w_0,\dots, x_k, \dots, w_3)  e^{\lambda_k (w_0,\dots, x_k, \dots, w_3)}\right) \\
= &  \sum _{k=0}^3  \psi_k \left[ \frac{\partial   f(w_0,\dots, x_k, \dots, w_3) }{\partial x_k}    +
  \frac{\partial     \lambda_k (w_0,\dots, x_k, \dots, w_3)     }{\partial x_k}    f(w_0,\dots, x_k, \dots, w_3)  \right]\times \\
   & \hspace{2cm }  e^{\lambda_k (w_0,\dots, x_k, \dots, w_3)}\\  	
 = & \sum _{k=0}^3 \psi_k (\partial^{q_k,q_k'}_{x_k} f )(w_0,\dots, x_k, \dots, w_3)e^{\lambda_k (w_0,\dots, x_k, \dots, w_3)}=
\end{align*} 	
\begin{align*}
 =&({}^{\psi} {\partial}_{{\vec q},{\{\vec q}\}'} f )(w,x) \sum _{k=0}^3 e^{\lambda_k (w_0,\dots, x_k, \dots, w_3)} \\
& -  \sum _{ k,j=0, k\neq j }^3 \psi_k (\partial^{q_k,q_k'}_{x_k} f )(w_0,\dots, x_k, \dots, w_3) e^{\lambda_j (w_0,\dots, x_j, \dots, w_3)}.
\end{align*}
\end{proof}
\begin{remark}\label{rem1234} In an analogous way, we can state the counterpart of Proposition \ref{pro2} for the right-$\psi$-Fueter operator. In fact, let $g\in {\bf A}_{ \vec{\mathfrak{q}}, \{\vec {\mathfrak{q}}\}', r}$ and suppose there exist real-valued functions
$\eta_k = \eta_k(w_0,\dots, x_k, \dots, w_3)$ defined on $\Omega$, for $k=0,1,2,3$, such that
\begin{align*}
&  \frac{\partial     \eta_k(w_0,\dots, x_k, \dots, w_3)   }{\partial x_k} g(w_0,\dots, x_k, \dots, w_3)  \\
= & \left[(\partial^{\mathfrak{q}_k, \mathfrak{q}_k'}_{x_k} g )(w_0, \dots,   x_k, \dots ,w_3)  - \frac{\partial    g (w_0,\dots, x_k, \dots, w_3)   }{\partial x_k}  \right],
\end{align*}
for all  $x\in \Omega$. Then, computations similar to those ones in the previous proof show that
\begin{align*}
&  \mathcal D^{\psi} [ \sum _{k=0}^3 g(w_0,\dots, x_k, \dots, w_3)  e^{\eta_k (w_0,\dots, x_k, \dots, w_3)}]  \\
=  & ({}^{\psi} {\partial}_{{\vec{\mathfrak{q}}},{\{\vec{\mathfrak{q}}}\}', r } g )(w,x)    \sum _{k=0}^3
e^{\eta_k (w_0,\dots, x_k, \dots, w_3)}  \\
 &   -
  \sum _{k,j=0,  k\neq j }^3 (\partial^{\mathfrak{q}_k, \mathfrak{q}_k'}_{x_k} g )(w_0, \dots, x_k, \dots ,w_3) e^{\eta_j (w_0,\dots, x_j, \dots, w_3)}  \psi_k,  \quad \forall  x\in \Omega,
\end{align*}
\end{remark}
We will use the symbol ${\bf A}_{ \vec q ,  \{\vec q \}'}(\Omega)$ to denote the subset of ${\bf A}_{ \vec q ,  \{\vec q \}'}$ whose elements satisfy the assumptions of Proposition \ref{pro2}.

Similarly, we denote by ${\bf A}_{{\vec {\mathfrak q} }, {\{\vec {\mathfrak q}}\}',r}(\Omega)$ the set of all functions satisfying the conditions in Remark \ref{rem1234}.

For the computations in the next section, it is useful to introduce some more notations. Given $f \in {\bf A}_{{\vec q}, {\{\vec q}\}'}(\Omega)$ and  $g \in {\bf A}_{{\vec{\mathfrak q}}, {\{\vec {\mathfrak q} }\}', r }(\Omega)$ we define
$$(I_{{\vec q}, {\{\vec q}\}'} f)(w,x):=\sum _{k=0}^3 f(w_0,\dots, x_k, \dots, w_3)  e^{\lambda_k (w_0,\dots, x_k, \dots, w_3)}$$
and
$$(I_{{ \vec{\mathfrak q} }, \{  \vec{\mathfrak q} \}'} g)(w,x):=\sum _{k=0}^3 g(w_0,\dots, x_k, \dots, w_3)  e^{\eta_k (w_0,\dots, x_k, \dots, w_3)}.$$
Proposition \ref{pro2} now shows that
\begin{align*}
 {}^{\psi}\mathcal D [(I_{{\vec q}, {\{\vec q}\}'} f)(w,x)]   = &({}^{\psi} {\partial}_{{\vec q},{\{\vec q}\}'} f )(w,x) \sum _{k=0}^3     e^{\lambda_k (w_0,\dots, x_k, \dots, w_3)} \\
 & -
  \sum _{ k,j=0, k\neq j }^3  \psi_k     (\partial^{q_k,q_k'}_{x_k} f )(w_0,\dots, x_k, \dots, w_3)   e^{\lambda_j (w_0,\dots, x_j, \dots, w_3)}
\end{align*}
and from Remark \ref{rem1234} we see that
\begin{align*}
  \mathcal D ^{\psi} [(I_{{\vec {\mathfrak q}}, {\{\vec {\mathfrak q} }\}'} g)(w,x)] = &({}^{\psi} {\partial}_{{\vec {\mathfrak q}},{\{\vec {\mathfrak q}}\}', r } g )(w,x) \sum _{k=0}^3 e^{\eta_k (w_0,\dots, x_k, \dots, w_3)} \\
 & -
  \sum _{ k,j=0, k\neq j }^3 (\partial^{{\mathfrak q}_k,{\mathfrak q}_k'}_{x_k} g )(w_0,\dots, x_k, \dots, w_3)   e^{\eta_j (w_0,\dots, x_j, \dots, w_3)} \psi_k.
\end{align*}

\section{Main Results}
In this section, we formulate and prove our main results: the Stokes and Borel-Pompeiu type formulas associated with the operators ${}^{\psi} {\partial }_{{\vec q},{\{\vec q}\}'}$ and  ${}^{\psi} {\partial }_{{\vec{\mathfrak q}},{\{\vec{\mathfrak q}}\}', r}$. Moreover, some direct consequences such as the Cauchy formula and the Cauchy theorem are obtained.\\
We begin by proving a technical result, useful in the sequel.
\begin{proposition}\label{prop}
Let $f\in {\bf A}_{{\vec q}, {\{\vec q}\}'}(\Omega)$ and $\lambda_{k}$ as in Proposition \ref{pro2} and let
$g\in {\bf A}_{{\vec {\mathfrak q}}, {\{\vec{\mathfrak q}}\}', r}(\Omega) $ and $\eta_{k}$  as in Remark \ref{rem1234},
for $k=0,1,2,3$. Then
$$d\left((I_{{\vec{\mathfrak q}}, \{\vec {\mathfrak q}\}', r} g)(w,x) \sigma^{{\psi} }_x (I_{{\vec q}, \{\vec q\}'}f)(w,x) \right)$$
$$= ({}^{\psi} {\partial}_{\vec{\mathfrak q}, \{\vec{\mathfrak q}\}', r } g )(w,x)  (I_{{\vec q },  \{\vec q\}'} f)(w,x) \sum _{k=0}^3 e^{\eta_k (w_0,\dots, x_k, \dots, w_3)} dx$$
\begin{align*}
 &  +
  (I_{   \vec{\mathfrak q} ,   \{\vec {\mathfrak q} \}',r } g)(w,x)    ({}^{\psi} {\partial}_{  \vec q ,  \{\vec q\}'} f )(w,x)
   \sum _{k=0}^3 e^{    \lambda_k  (w_0,\dots, x_k, \dots, w_3) }   dx\\
&  -  \sum _{ k,j=0, k\neq j }^3  \psi_k (\partial^{  {\mathfrak q}_k,{\mathfrak q}_k'}_{x_k} g )(w_0,\dots, x_k, \dots, w_3)e^{\eta_j (w_0,\dots, x_j, \dots, w_3)} (I_{  {\vec q  }, \{\vec   q \}'  } f)(w,x)   dx \\
&  - (I_{   \vec{\mathfrak q}   ,   \{\vec {\mathfrak q} \}',r } g)(w,x)  \sum _{ k,j=0, k\neq j }^3  \psi_k (\partial^{q_k,q_k'}_{x_k} f )(w_0,\dots, x_k, \dots, w_3)e^{\lambda_j (w_0,\dots, x_j, \dots, w_3)}dx.
\end{align*}
and
\begin{align*}
& 		\int_{\partial \Lambda}    (I_{  {\vec{\mathfrak q}}, \{\vec {\mathfrak q} \}',r } g)(w,x)   \sigma^{{\psi} }_x  (I_{  {\vec q},   \{\vec q\}' }  f)(w,x)   \\
 = &  \int_{\Lambda}
   ({}^{\psi} {\partial}_{    \vec{\mathfrak q}  ,     \{\vec{\mathfrak q}\}', r } g )(w,x)  (I_  {   {\vec q },  \{\vec  q  \}'  } f)(w,x)
  \sum _{k=0}^3 e^{    \eta_k (w_0,\dots, x_k, \dots, w_3) }   dx    \\
 &  +    \int_{\Lambda}
  (I_{   \vec{\mathfrak q} ,   \{\vec {\mathfrak q} \}',r } g)(w,x)    ({}^{\psi} {\partial}_{  \vec q ,  \{\vec q\}'} f )(w,x)
   \sum _{k=0}^3 e^{    \lambda_k  (w_0,\dots, x_k, \dots, w_3) }   dx\\
&  -   \int_{\Lambda}   \sum _{ k,j=0, k\neq j }^3  \psi_k (\partial^{  {\mathfrak q}_k,{\mathfrak q}_k'}_{x_k} g )(w_0,\dots, x_k, \dots, w_3)e^{\eta_j (w_0,\dots, x_j, \dots, w_3)} (I_{  {\vec q  }, \{\vec   q \}'  } f)(w,x)   dx \\
&  -    \int_{\Lambda}    (I_{   \vec{\mathfrak q}   ,   \{\vec {\mathfrak q} \}',r } g)(w,x)  \sum _{ k,j=0, k\neq j }^3  \psi_k (\partial^{q_k,q_k'}_{x_k} f )(w_0,\dots, x_k, \dots, w_3)e^{\lambda_j (w_0,\dots, x_j, \dots, w_3)}dx.
\end{align*}
Furthermore,
$$\int_{\partial \Lambda }
  (I_{  \vec{\mathfrak q} , \{\vec {\mathfrak q} \}' ,r} g)(w,\tau)  \sigma_{\tau}^{\psi} K_{\psi}(\tau-x)
+
K_{\psi}(\tau-x)\sigma_{\tau}^{\psi} (I_{{\vec q}, {\{\vec q\}'}} f)(w,\tau) $$
\begin{align*}
&  - \int_{\Lambda}  ({}^{\psi} {\partial}_{ \vec{\mathfrak q},  \{\vec {\mathfrak q}\}', r } g )(w,y)   K_{\psi} (y-x) [ \sum _{k=0}^3     e^{\eta_k (w_0,\dots, y_k, \dots, w_3)}] dy \\
&  - \int_{\Lambda}  K_{\psi} (y-x) [ \sum _{k=0}^3     e^{\lambda_k (w_0,\dots, y_k, \dots, w_3)}] ({}^{\psi} {\partial}_{{\vec q}, {\{\vec q\}'}} f )(w,y) dy \\
& +  \int_{\Lambda}   \sum _{ k,j=0, k\neq j }^3  (\partial^{
{\mathfrak q}_k,{\mathfrak q}_k'}_{y_k} g )(w_0,\dots, y_k, \dots, w_3) \psi_k e^{\eta_j (w_0,\dots, y_j, \dots, w_3)}  K_{\psi} (y-x) dy  \nonumber \\
& +  \int_{\Lambda}  K_{\psi} (y-x) \sum _{ k,j=0, k\neq j }^3 \psi_k (\partial^{q_k,q_k'}_{y_k} f )(w_0,\dots, y_k, \dots, w_3) e^{\lambda_j (w_0,\dots, y_j, \dots, w_3)} dy  \nonumber \\
=  & \left\{ \begin{array}{ll}   \displaystyle  \sum _{k=0}^3 (f  e^{\lambda_k } + g   e^{\eta_k })(w_0,\dots, x_k, \dots, w_3)
   , &  x\in \Lambda,  \\ 0 , &  x\in   \Omega\setminus \overline{\Lambda}.
\end{array} \right.
\end{align*}
\end{proposition}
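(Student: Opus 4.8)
The plan is to reduce everything to the already-established machinery: the classical Stokes formula \eqref{Stokes}, the classical Borel--Pompeiu formula \eqref{BPForm}, and the identities for ${}^{\psi}\mathcal D[(I_{{\vec q},{\{\vec q}\}'}f)]$ and $\mathcal D^{\psi}[(I_{{\vec{\mathfrak q}},{\{\vec{\mathfrak q}}\}'}g)]$ derived from Proposition \ref{pro2} and Remark \ref{rem1234}. First I would prove the differential identity. Apply the differential Stokes formula \eqref{Stokes} with $g$ replaced by $(I_{{\vec{\mathfrak q}},\{\vec{\mathfrak q}\}',r}g)(w,x)$ and $f$ replaced by $(I_{{\vec q},\{\vec q\}'}f)(w,x)$, treating $w$ as a fixed parameter so that these are genuine $C^1$ functions of $x$ on $\Omega$. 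This gives
\[
d\!\left((I_{{\vec{\mathfrak q}},\{\vec{\mathfrak q}\}',r}g)\,\sigma^{\psi}_x\,(I_{{\vec q},\{\vec q\}'}f)\right)
=\left(\mathcal D^{\psi}[(I_{{\vec{\mathfrak q}},\{\vec{\mathfrak q}\}',r}g)]\,(I_{{\vec q},\{\vec q\}'}f)
+(I_{{\vec{\mathfrak q}},\{\vec{\mathfrak q}\}',r}g)\,{}^{\psi}\mathcal D[(I_{{\vec q},\{\vec q\}'}f)]\right)dx .
\]
Then I would substitute the two boxed identities recorded just before Section~4: ${}^{\psi}\mathcal D[(I_{{\vec q},\{\vec q\}'}f)]$ equals $({}^{\psi}{\partial}_{{\vec q},{\{\vec q}\}'}f)\sum_k e^{\lambda_k}$ minus the ``off-diagonal'' double sum $\sum_{k\neq j}\psi_k(\partial^{q_k,q_k'}_{x_k}f)e^{\lambda_j}$, and similarly for $\mathcal D^{\psi}[(I_{{\vec{\mathfrak q}},\{\vec{\mathfrak q}\}'}g)]$. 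Expanding the product of the two bracketed expressions and collecting the four resulting groups of terms yields exactly the claimed four-line formula; this is a bookkeeping step with no conceptual content.

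For the integral Stokes identity, I would simply integrate the differential identity over $\Lambda$ and invoke the integral form of \eqref{Stokes} (valid since $\partial\Lambda\subset\Omega$ is a $3$-dimensional smooth surface and the integrands are $C^1$ on $\overline{\Lambda}$); the four volume integrals on the right are the integrals of the four terms just obtained, so nothing new is needed.

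For the Borel--Pompeiu identity, the strategy is to apply the classical formula \eqref{BPForm} to the pair of functions $(I_{{\vec q},\{\vec q\}'}f)(w,\cdot)$ and $(I_{{\vec{\mathfrak q}},\{\vec{\mathfrak q}\}',r}g)(w,\cdot)$ on the domain $\Lambda$. On the left-hand boundary term of \eqref{BPForm}, the Cauchy kernel $K_\psi$ is unchanged, so those two surface integrals appear verbatim. On the solid-integral term of \eqref{BPForm} one meets $K_\psi(y-x)\,{}^{\psi}\mathcal D[(I_{{\vec q},\{\vec q\}'}f)](w,y)$ and $\mathcal D^{\psi}[(I_{{\vec{\mathfrak q}},\{\vec{\mathfrak q}\}',r}g)](w,y)\,K_\psi(y-x)$; again I substitute the two boxed identities. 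The diagonal parts produce the two integrals $-\int_\Lambda K_\psi(y-x)[\sum_k e^{\lambda_k}]({}^{\psi}{\partial}_{{\vec q},{\{\vec q}\}'}f)$ and $-\int_\Lambda({}^{\psi}{\partial}_{{\vec{\mathfrak q}},{\{\vec{\mathfrak q}}\}',r}g)K_\psi(y-x)[\sum_k e^{\eta_k}]$ displayed in the statement, and the off-diagonal double sums, after moving $K_\psi$ to the correct side and flipping the sign coming from the minus in the boxed identities, become the two ``$+\int_\Lambda\sum_{k\neq j}$'' correction integrals. Finally, the right-hand side of \eqref{BPForm} evaluated at the pair $(I_{{\vec{\mathfrak q}},\{\vec{\mathfrak q}\}',r}g,\,I_{{\vec q},\{\vec q\}'}f)$ is $(I_{{\vec{\mathfrak q}},\{\vec{\mathfrak q}\}',r}g)(w,x)+(I_{{\vec q},\{\vec q\}'}f)(w,x)=\sum_{k=0}^3(f\,e^{\lambda_k}+g\,e^{\eta_k})(w_0,\dots,x_k,\dots,w_3)$ for $x\in\Lambda$ and $0$ for $x\in\Omega\setminus\overline{\Lambda}$, which is the stated piecewise value.

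The only genuine subtlety — the ``hard part'' — is \emph{organizing the algebra} so that the off-diagonal double sums end up with precisely the sign and side (left/right multiplication by $\psi_k$, and left/right placement of $K_\psi$) shown in the statement; in particular one must be careful that in the Borel--Pompeiu term the factor $\psi_k$ sits on the correct side of $(\partial^{{\mathfrak q}_k,{\mathfrak q}_k'}_{y_k}g)$ versus $(\partial^{q_k,q_k'}_{y_k}f)$, matching the left/right nature of the two Fueter operators. Everything else is a direct, if lengthy, substitution, and no new analytic input beyond \eqref{Stokes}, \eqref{BPForm}, Proposition \ref{pro2} and Remark \ref{rem1234} is required.
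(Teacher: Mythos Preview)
Your proposal is correct and follows essentially the same route as the paper: apply the classical Stokes identities \eqref{Stokes} and the Borel--Pompeiu formula \eqref{BPForm} to the auxiliary functions $(I_{{\vec q},\{\vec q\}'}f)(w,\cdot)$ and $(I_{\vec{\mathfrak q},\{\vec{\mathfrak q}\}',r}g)(w,\cdot)$, then substitute the expressions for ${}^{\psi}\mathcal D[(I_{{\vec q},\{\vec q\}'}f)]$ and $\mathcal D^{\psi}[(I_{\vec{\mathfrak q},\{\vec{\mathfrak q}\}'}g)]$ coming from Proposition~\ref{pro2} and Remark~\ref{rem1234}. The paper's proof is exactly this, with the bookkeeping left implicit.
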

\begin{proof}
Let us apply \eqref{Stokes} to the functions
\begin{align}\label{funct}
(I_{{\vec q}, {\{\vec q}\}'} f)(w,x): x\mapsto \sum _{k=0}^3 f(w_0,\dots, x_k, \dots, w_3) e^{\lambda_k (w_0,\dots, x_k, \dots, w_3)} \nonumber \\
(I_{    \vec {\mathfrak q},  \{\vec {\mathfrak q}\}'} g)(w,x): x\mapsto \sum _{k=0}^3 g(w_0,\dots, x_k, \dots, w_3) e^{\eta_k (w_0,\dots, x_k, \dots, w_3)} .
\end{align}
By Proposition \ref{pro2} we obtain the first two identities:
\begin{align*}
& d\left((I_{\vec{\mathfrak q}, \{\vec {\mathfrak q} \}'}g)(w,x)\sigma^{{\psi} }_x (I_{\vec q, \{\vec q\}'} f)(w,x) \right) \\
=&
\left\{\mathcal D^{\psi}\left[(I_{\vec {\mathfrak q}, \{\vec {\mathfrak q} \}'} g)(w,x)\right] (I_{\vec q, \{\vec q \}'} f)(w,x) +
(I_{\vec {\mathfrak q}, \{\vec {\mathfrak q}\}'}g)(w,x) {}^{{\psi}}\mathcal D\left[(I_{\vec q, \{\vec q\}'}f)(w,x)\right] \right\}dx
\end{align*}
and
\begin{align*}
&		\int_{\partial \Lambda}    (I_{    \vec {\mathfrak q} ,  \{\vec {\mathfrak q} \}' }    g)(w,x)  \sigma^{{\psi} }_x (I_{    \vec q ,  \{\vec q\}' }    f)(w,x)  \\
 =  &  \int_{\Omega} \left\{      \mathcal D^{\psi}\left[(I_{    \vec {\mathfrak q}  ,  \{\vec {\mathfrak q} \}', }  g)  (w,x)\right]    (I_{    \vec q ,  \{\vec q \}' }  f)(w,x)   +
(I_{    \vec {\mathfrak q} ,  \{\vec {\mathfrak q} \}' }  g)(w,x)   {}^{{\psi}}\mathcal D\left[(I_{    \vec q  ,  \{\vec q\}' }  f)  (w,x)\right] \right\}dx.
\end{align*}
On the other hand, from \eqref{BPForm} we have
\begin{align*}
&  \int_{\partial \Lambda } K_{\psi}(\tau-x)\sigma_{\tau}^{\psi} (I_{{\vec q}, {\{\vec q\}'}} f)(w,\tau)
  - \int_{\Lambda}  K_{\psi} (y-x){}^{\psi}\mathcal D \left[(I_{{\vec q}, {\{\vec q\}'} }f)(w,y)\right]dy \nonumber \\
& = \left\{\begin{array}{ll} (I_{{\vec q}, {\{\vec q\}'}}f)(w,x), & x\in \Lambda, \\ 0, & x\in \Omega\setminus \overline{\Lambda},
\end{array} \right.
\end{align*}
Similarly, by Proposition \ref{prop} we obtain
\begin{align*}
&  \int_{\partial \Lambda } K_{\psi}(\tau-x)\sigma_{\tau}^{\psi} (I_{{\vec q}, {\{\vec q\}'}} f)(w,\tau)
  - \int_{\Lambda}  K_{\psi} (y-x) [ \sum _{k=0}^3     e^{\lambda_k (w_0,\dots, y_k, \dots, w_3)}] ({}^{\psi} {\partial}_{{\vec q},{\{\vec q\}'}} f )(w,y) dy  \\
& +  \int_{\Lambda}  K_{\psi} (y-x) \sum _{ k,j=0, k\neq j }^3  \psi_k (\partial^{q_k,q_k'}_{y_k} f ) (w_0,\dots, y_k, \dots, w_3) e^{\lambda_j (w_0,\dots, y_j, \dots, w_3)} dy   \nonumber \\
=  &       \left\{ \begin{array}{ll}  \displaystyle  \sum _{k=0}^3 f(w_0,\dots, x_k, \dots, w_3)  e^{\lambda_k (w_0,\dots, x_k, \dots, w_3)},
&  x\in\Lambda,  \\ 0 , &  x\in   \Omega\setminus\overline{\Lambda} .
\end{array} \right.
\end{align*}
The formula for $g$ is obtained with similar calculations.
\end{proof}
\begin{remark}
Taking $x=w$ in the last formula of Proposition \ref{prop}, we obtain that
\begin{align*}
& \ \quad  \int_{\partial \Lambda }
  (I_{  \vec{\mathfrak q} , \{\vec {\mathfrak q} \}' ,r} g)(w,\tau)  \sigma_{\tau}^{\psi} K_{\psi}(\tau-w)
+
K_{\psi}(\tau-w)\sigma_{\tau}^{\psi} (I_{{\vec q}, {\{\vec q\}'}} f)(w,\tau) \\
&  - \int_{\Lambda}  ({}^{\psi} {\partial}_{ \vec{\mathfrak q},  \{\vec {\mathfrak q}\}', r } g )(w,y)   K_{\psi} (y-w) [ \sum _{k=0}^3     e^{\eta_k (w_0,\dots, y_k, \dots, w_3)}] dy \\
&  - \int_{\Lambda}  K_{\psi} (y-w) [ \sum _{k=0}^3     e^{\lambda_k (w_0,\dots, y_k, \dots, w_3)}] ({}^{\psi} {\partial}_{{\vec q}, {\{\vec q\}'}} f )(w,y) dy \\
& +  \int_{\Lambda}   \sum _{ k,j=0, k\neq j }^3  (\partial^{
{\mathfrak q}_k,{\mathfrak q}_k'}_{y_k} g )(w_0,\dots, y_k, \dots, w_3) \psi_k e^{\eta_j (w_0,\dots, y_j, \dots, w_3)}  K_{\psi} (y-w) dy
\end{align*}
\begin{align*}
& +  \int_{\Lambda}  K_{\psi} (y-w) \sum _{ k,j=0, k\neq j }^3 \psi_k (\partial^{q_k,q_k'}_{y_k} f )(w_0,\dots, y_k, \dots, w_3) e^{\lambda_j (w_0,\dots, y_j, \dots, w_3)} dy  \nonumber \\
=  & \left\{ \begin{array}{ll}   \displaystyle   f(w) \sum _{k=0}^3  e^{\lambda_k (w_0,\dots, w_k, \dots, w_3) } + g (w)    \sum _{k=0}^3  e^{\eta_k (w_0,\dots, w_k, \dots, w_3)})
   , &  w\in \Lambda,  \\ 0 , &  w\in   \Omega\setminus \overline{\Lambda}.
\end{array} \right.
\end{align*}
\end{remark}
An immediate consequence of this discussion is the following:
\begin{corollary} \label{coro} Under the same hypothesis and notations of Proposition \ref{prop} if, in addition, it holds that
$$({}^{\psi}\partial_{\vec{\mathfrak  q},  \{\vec {\mathfrak  q} \}', r}  g)(w,x)= 0 =({}^{\psi}\partial_{{\vec q},{\{\vec q}\}'}f)(w,x),\quad for all x\in \Omega,$$
then
\begin{align*}
& d\left(    (I_{  {\vec{\mathfrak q}}, \{\vec {\mathfrak q} \}',r } g)(w,x)   \sigma^{{\psi} }_x  (I_{  {\vec q},   \{\vec q\}' }  f)(w,x) \right)  \\
= &  -  \sum _{ k,j=0, k\neq j }^3  \psi_k (\partial^{  {\mathfrak q}_k,{\mathfrak q}_k'}_{x_k} g )(w_0,\dots, x_k, \dots, w_3)e^{\eta_j (w_0,\dots, x_j, \dots, w_3)} (I_{{\vec q }, \{\vec q \}'} f)(w,x)   dx \\
&  - (I_{   \vec{\mathfrak q},   \{\vec {\mathfrak q} \}',r } g)(w,x)  \sum _{ k,j=0, k\neq j }^3\psi_k (\partial^{q_k,q_k'}_{x_k} f )(w_0,\dots, x_k, \dots, w_3)e^{\lambda_j (w_0,\dots, x_j, \dots, w_3)}dx.
\end{align*}
Moreover
\begin{align*}
& 		\int_{\partial \Lambda} (I_{{\vec{\mathfrak q}}, \{\vec {\mathfrak q} \}',r } g)(w,x)   \sigma^{{\psi} }_x  (I_{  {\vec q},   \{\vec q\}' }  f)(w,x)   \\
 =  & -   \int_{\Lambda}   \sum _{ k,j=0, k\neq j }^3  \psi_k (\partial^{  {\mathfrak q}_k,{\mathfrak q}_k'}_{x_k} g )(w_0,\dots, x_k, \dots, w_3)e^{\eta_j (w_0,\dots, x_j, \dots, w_3)} (I_{{\vec q}, \{\vec q\}'}f)(w,x)dx,
\end{align*}
\begin{align*}
&  -    \int_{\Lambda} (I_{   \vec{\mathfrak q},   \{\vec {\mathfrak q} \}',r } g)(w,x)  \sum _{ k,j=0, k\neq j }^3  \psi_k (\partial^{q_k,q_k'}_{x_k} f )(w_0,\dots, x_k, \dots, w_3)e^{\lambda_j (w_0,\dots, x_j, \dots, w_3)}dx
\end{align*}
and
\begin{align*}
& \ \quad  \int_{\partial \Lambda } (I_{\vec{\mathfrak q}, \{\vec {\mathfrak q} \}' ,r} g)(w,\tau)  \sigma_{\tau}^{\psi} K_{\psi}(\tau-x)
+ K_{\psi}(\tau-x)\sigma_{\tau}^{\psi} (I_{{\vec q}, {\{\vec q\}'}} f)(w,\tau) \\
& +  \int_{\Lambda}   \sum _{ k,j=0, k\neq j }^3  (\partial^{
{\mathfrak q}_k,{\mathfrak q}_k'}_{y_k} g )(w_0,\dots, y_k, \dots, w_3) \psi_k e^{\eta_j (w_0,\dots, y_j, \dots, w_3)}  K_{\psi} (y-x) dy  \nonumber \\
& +  \int_{\Lambda}  K_{\psi} (y-x) \sum _{ k,j=0, k\neq j }^3 \psi_k (\partial^{q_k,q_k'}_{y_k} f )(w_0,\dots, y_k, \dots, w_3) e^{\lambda_j (w_0,\dots, y_j, \dots, w_3)} dy  \nonumber \\
=  & \left\{ \begin{array}{ll}   \displaystyle  \sum _{k=0}^3 (f  e^{\lambda_k } + g   e^{\eta_k })(w_0,\dots, x_k, \dots, w_3)
   , &  x\in \Lambda,  \\ 0 , &  x\in   \Omega\setminus \overline{\Lambda}.
\end{array} \right.
\end{align*}
\end{corollary}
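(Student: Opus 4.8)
The plan is to obtain all three assertions as direct specializations of Proposition \ref{prop}: every identity in the Corollary is the corresponding identity of Proposition \ref{prop} after imposing $({}^{\psi}\partial_{\vec{\mathfrak q},\{\vec{\mathfrak q}\}',r}g)(w,x)=0=({}^{\psi}\partial_{\vec q,\{\vec q\}'}f)(w,x)$ on $\Omega$. No new construction is needed; one only has to track which terms survive the substitution.

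First I would treat the differential identity. Proposition \ref{prop} writes $d\big((I_{\vec{\mathfrak q},\{\vec{\mathfrak q}\}',r}g)(w,x)\,\sigma^{\psi}_x\,(I_{\vec q,\{\vec q\}'}f)(w,x)\big)$ as a sum of four $4$-forms on $\Lambda$: the first carries the factor $({}^{\psi}\partial_{\vec{\mathfrak q},\{\vec{\mathfrak q}\}',r}g)(w,x)$, the second the factor $({}^{\psi}\partial_{\vec q,\{\vec q\}'}f)(w,x)$, and the last two are the ``cross'' terms, i.e. the sums over $k\neq j$. Under the hypothesis the first two forms vanish identically, leaving exactly the first display of the Corollary. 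Integrating this over $\Lambda$ and invoking the classical Stokes formula \eqref{Stokes} — or, equivalently, specializing the second display of Proposition \ref{prop} in the same way — yields the identity over $\partial\Lambda$.

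For the Borel--Pompeiu type identity I would take the last display of Proposition \ref{prop} and again delete the two volume integrals built from $({}^{\psi}\partial_{\vec{\mathfrak q},\{\vec{\mathfrak q}\}',r}g)(w,y)$ and $({}^{\psi}\partial_{\vec q,\{\vec q\}'}f)(w,y)$, which now vanish pointwise on $\Lambda$; the piecewise right-hand side $\sum_{k}\big(fe^{\lambda_k}+ge^{\eta_k}\big)(w_0,\dots,x_k,\dots,w_3)$ does not involve these operators and is therefore untouched. What remains is precisely the stated formula, valid for $x\in\Lambda$ and for $x\in\Omega\setminus\overline{\Lambda}$.

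The computations are routine and I do not expect a real obstacle. The only point demanding some care is the bookkeeping of the surviving cross terms $\sum_{k\neq j}$: one must keep the units $\psi_k$, the deformed partial derivatives $\partial^{\mathfrak q_k,\mathfrak q_k'}_{x_k}g$ (respectively $\partial^{q_k,q_k'}_{x_k}f$), the factors $(I_{\vec q,\{\vec q\}'}f)$ / $(I_{\vec{\mathfrak q},\{\vec{\mathfrak q}\}',r}g)$, and the scalar exponentials $e^{\eta_j}$ (respectively $e^{\lambda_j}$) on the correct sides of the quaternionic products, so that the right-linearity of ${}^{\psi}\partial_{\vec q,\{\vec q\}'}$ and the left-linearity of ${}^{\psi}\partial_{\vec{\mathfrak q},\{\vec{\mathfrak q}\}',r}$ are respected; this is exactly the placement already fixed in Proposition \ref{pro2} and Remark \ref{rem1234}, so it carries over verbatim.
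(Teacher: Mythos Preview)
Your approach is correct and matches the paper's: the corollary is stated there as an ``immediate consequence'' of Proposition \ref{prop}, with no further argument given. Your plan of substituting the vanishing hypotheses into the three identities of Proposition \ref{prop} and retaining only the cross terms is exactly what is intended.
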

\begin{remark}
Taking $x=w$ in the last formula of Corollary \ref{coro} yields
\begin{align*}
& \ \quad  \int_{\partial \Lambda}(I_{\vec{\mathfrak q}, \{\vec {\mathfrak q} \}', r}g)(w,\tau) \sigma_{\tau}^{\psi} K_{\psi}(\tau-w)
+
K_{\psi}(\tau-w)\sigma_{\tau}^{\psi} (I_{{\vec q}, {\{\vec q\}'}} f)(w,\tau) \\
& +  \int_{\Lambda}   \sum _{ k,j=0, k\neq j }^3  (\partial^{
{\mathfrak q}_k,{\mathfrak q}_k'}_{y_k} g )(w_0,\dots, y_k, \dots, w_3) \psi_k e^{\eta_j (w_0,\dots, y_j, \dots, w_3)}  K_{\psi} (y-w) dy  \nonumber \\
& +  \int_{\Lambda}  K_{\psi} (y-w) \sum _{ k,j=0, k\neq j }^3 \psi_k (\partial^{q_k,q_k'}_{y_k} f )(w_0,\dots, y_k, \dots, w_3) e^{\lambda_j (w_0,\dots, y_j, \dots, w_3)} dy  \nonumber \\
\end{align*}
\begin{align*}
=  & \left\{ \begin{array}{ll}   \displaystyle   f(w) \sum _{k=0}^3 e^{\lambda_k  (w_0,\dots, w_k, \dots, w_3)} + g (w)  \sum _{k=0}^3 e^{\eta_k  (w_0,\dots, w_k, \dots, w_3)}
   , &  w\in \Lambda,  \\ 0 , &  w\in   \Omega\setminus \overline{\Lambda}.
\end{array} \right.
\end{align*}
\end{remark}
Now, at the price of weakening the hypothesis of Proposition \ref{pro2}, the next statement provides a more general result.
\begin{proposition}\label{pro3} Let $f\in {\bf A}_{ \vec q ,  \{\vec q \}'}$. Suppose that there exists some real-valued functions $\lambda_{j,k}(x_k) = \lambda_{j,k}(w_0,\dots, x_k, \dots, w_3)$  for $j,k=0,1,2,3$  such that
\begin{align*}
&\partial_k \lambda_{j,k} (w_0,\dots, x_k, \dots, w_3) f_j(w_0,\dots, x_k, \dots, w_3) \\
= & \left[ (\partial^{q_k,q_k'}_{x_k} f_j )(w_0,\dots, x_k, \dots, w_3) - \partial_k f_j(w_0,\dots, x_k, \dots, w_3)\right]
\end{align*}
for all $x\in \Omega.$ Then
\begin{align*}
& {}^{\psi}\mathcal D [ \sum _{j,k=0}^3\psi_j f_j(w_0,\dots, x_k, \dots, w_3)  e^{\lambda_{j,k} (w_0,\dots, x_k, \dots, w_3)}]  \\
= &  ({}^{\psi} {\partial}_{ \vec q , \{\vec q \}'} f )(w,x)\sum _{\ell,m=0}^3 e^{\lambda_{\ell,m} (w_0,\dots, x_m, \dots, w_3)} \\
&  - \sum _{ {{   \begin{array}{c}   j,k=0, j\not=k  \\  \ell, m =0, \ell\not= m \end{array} }}}^3 \psi_k \psi_j (\partial^{q_k,q_k'}_{x_k} f_j )(w_0,\dots, x_k, \dots, w_3) e^{\lambda_{\ell,m} (w_0,\dots, x_m, \dots, w_3)}.
\end{align*}
\end{proposition}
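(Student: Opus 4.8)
The plan is to mimic the proof of Proposition~\ref{pro2}, now carrying along the extra index that records the component $f_j$ of $f=\sum_{j=0}^3\psi_j f_j$. Write
\[
G(x):=\sum_{j,k=0}^3\psi_j\, f_j(w_0,\dots,x_k,\dots,w_3)\, e^{\lambda_{j,k}(w_0,\dots,x_k,\dots,w_3)}
\]
for the argument of ${}^{\psi}\mathcal D$ on the left-hand side, and apply ${}^{\psi}\mathcal D=\sum_{m=0}^3\psi_m\partial_m$ term by term (the $\partial_m$ being the derivatives with respect to the real components of $x$ in the basis $\psi$, as in Proposition~\ref{pro2}). The key observation is the one already used there: the $(j,k)$-th summand of $G$ is, as a function of $x=(x_0,\dots,x_3)$, a function of the single real coordinate $x_k$ only, the other coordinates being frozen at $w_m$; hence $\partial_m$ annihilates it unless $m=k$, and one is left with $ {}^{\psi}\mathcal D[G]=\sum_{j,k=0}^3\psi_k\psi_j\,\partial_k\!\bigl(f_j\, e^{\lambda_{j,k}}\bigr)$, where $\partial_k$ is the ordinary derivative in the active variable $x_k$.

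Next I would run the usual Leibniz-rule-plus-hypothesis computation: $\partial_k\bigl(f_j e^{\lambda_{j,k}}\bigr)=\bigl(\partial_k f_j+f_j\,\partial_k\lambda_{j,k}\bigr)e^{\lambda_{j,k}}$, and the standing hypothesis $\partial_k\lambda_{j,k}\,f_j=(\partial^{q_k,q_k'}_{x_k}f_j)-\partial_k f_j$ collapses the bracket to the $(q_k,q_k')$-difference quotient $(\partial^{q_k,q_k'}_{x_k}f_j)$. This already gives
\[
{}^{\psi}\mathcal D[G]=\sum_{j,k=0}^3\psi_k\psi_j\,(\partial^{q_k,q_k'}_{x_k}f_j)(w_0,\dots,x_k,\dots,w_3)\, e^{\lambda_{j,k}(w_0,\dots,x_k,\dots,w_3)},
\]
which is the exact analogue of the one-index identity obtained in the proof of Proposition~\ref{pro2}.

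It then remains to recognise this sum inside the asserted right-hand side. Since each $f_j$ is real-valued and the $\psi_j$ are constant, the difference operator passes through the coefficients, so $(\partial^{q_k,q_k'}_{x_k}f)=\sum_{j=0}^3\psi_j(\partial^{q_k,q_k'}_{x_k}f_j)$ and hence $({}^{\psi}{\partial}_{{\vec q},{\{\vec q}\}'}f)(w,x)=\sum_{j,k=0}^3\psi_k\psi_j(\partial^{q_k,q_k'}_{x_k}f_j)$. Multiplying this by the real scalar $\sum_{\ell,m=0}^3 e^{\lambda_{\ell,m}}$ produces a quadruple sum over $j,k,\ell,m$; I would split it according to whether the index pair $(\ell,m)$ attached to the exponential coincides with the pair $(j,k)$ attached to the difference quotient. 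The part with $(\ell,m)=(j,k)$ is exactly the expression for ${}^{\psi}\mathcal D[G]$ found above, and moving the remaining cross terms to the other side yields the stated formula. The computation carries no real obstacle: the only points that need care are the consistent use of the frozen-variable convention $(w_0,\dots,x_k,\dots,w_3)$ (so that the partial derivatives in ${}^{\psi}\mathcal D$ act one summand at a time) and the bookkeeping of the double indices $j,k$ versus $\ell,m$ when isolating the cross terms.
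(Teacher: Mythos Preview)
Your approach is essentially the paper's: compute ${}^{\psi}\mathcal D$ term by term (the paper does this by first fixing $j$ and invoking the computation of Proposition~\ref{pro2}, you do it in one pass), use the Leibniz rule together with the hypothesis to reach the intermediate identity $\sum_{j,k}\psi_k\psi_j(\partial^{q_k,q_k'}_{x_k}f_j)e^{\lambda_{j,k}}$, and then rewrite this as the product $({}^{\psi}\partial_{\vec q,\{\vec q\}'}f)\sum_{\ell,m}e^{\lambda_{\ell,m}}$ minus the cross terms. One caveat on your last paragraph: the split you describe isolates the terms with $(\ell,m)=(j,k)$, whereas the subtracted sum in the statement is indexed by the condition $j\neq k,\ \ell\neq m$; the paper's proof makes the same unexplained jump at this point, so your argument is no less complete than the original, but the bookkeeping you flag as ``needing care'' does not literally reproduce the stated index range.
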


\begin{proof} For fixed $j\in\{0,1,2,3\}$, direct computations show that
\begin{align*}
& {}^{\psi}\mathcal D [ \sum _{k=0}^3 f_j(w_0,\dots, x_k, \dots, w_3)  e^{\lambda_{j,k} (w_0,\dots, x_k, \dots, w_3)}]    \\
 = & \sum _{k=0}^3 \psi_k(\partial^{q_k,q_k'}_{x_k} f_j )(w_0,\dots, x_k, \dots, w_3) e^{\lambda_{j,k} (w_0,\dots, x_k, \dots, w_3)}.
\end{align*}
Therefore,
\begin{align*}
 & {}^{\psi}\mathcal D [ \sum _{j,k=0}^3\psi_j f_j(w_0,\dots, x_k, \dots, w_3)  e^{\lambda_{j,k} (w_0,\dots, x_k, \dots, w_3)}]\\
   = &   \sum _{j,k=0}^3  \psi_k \psi_j (\partial^{q_k,q_k'}_{x_k} f_j )(w_0,\dots, x_k, \dots, w_3) e^{\lambda_{j,k} (w_0,\dots, x_k, \dots, w_3)} \\
	 = & ({}^{\psi} {\partial}_{    \vec q   , \{\vec q \}'} f )(w,x)     \sum _{\ell,m=0}^3     e^{\lambda_{\ell,m} (w_0,\dots, x_m, \dots, w_3)}
\end{align*}
\begin{align*}
 & - \sum _{ {{   \begin{array}{c}   j,k=0, j\not=k  \\  \ell, m =0, \ell\not= m \end{array} }}}^3 \psi_k \psi_j (\partial^{q_k,q_k'}_{x_k} f_j )(w_0,\dots, x_k, \dots, w_3) e^{\lambda_{\ell,m} (w_0,\dots, x_m, \dots, w_3)}.
\end{align*}
\end{proof}
\begin{remark}\label{rem3.5} Let   $g\in {\bf A}_{ \vec {\mathfrak q} ,  \{\vec {\mathfrak q} \}', r }$ be such that there exists  real-valued functions $\eta_{j,k}(x_k) = \eta_{j,k}(w_0,\dots, x_k, \dots, w_3)$  for $j,k=0,1,2,3$  for which
\begin{align*}
&\partial_k \lambda_{j,k} (w_0,\dots, x_k, \dots, w_3) g_j(w_0,\dots, x_k, \dots, w_3) \\
= & \left[ (\partial^{q_k,q_k'}_{x_k} g_j )(w_0,\dots, x_k, \dots, w_3) - \partial_k g_j(w_0,\dots, x_k, \dots, w_3)\right]
\end{align*}
for all $x\in \Omega.$ Then from  similar way to the previous proposition we can see that
\begin{align*}
&  \mathcal D^{\psi} [ \sum _{j,k=0}^3\psi_j g_j(w_0,\dots, x_k, \dots, w_3)  e^{\eta_{j,k} (w_0,\dots, x_k, \dots, w_3)}]  \\
= &  ({}^{\psi} {\partial}_{ \vec {\mathfrak q} , \{\vec {\mathfrak q} \}', r } g )(w,x)\sum _{\ell,m=0}^3 e^{\eta_{\ell,m} (w_0,\dots, x_m, \dots, w_3)} \\
&  - \sum _{ {{   \begin{array}{c}   j,k=0, j\not=k  \\  \ell, m =0, \ell\not= m \end{array} }}}^3 \psi_j  \psi_k (\partial^{  {\mathfrak q}_k,{\mathfrak q}_k'}_{x_k} g_j )(w_0,\dots, x_k, \dots, w_3) e^{\eta_{\ell,m} (w_0,\dots, x_m, \dots, w_3)}.
\end{align*}
\end{remark}
\begin{remark}
Note that if $f_j(w_0,\dots, x_k, \dots, w_3) \neq 0 $ for all $x\in \Omega$ and all $j,k=0,1,2,3$ it follows
\begin{align*}
& \partial_k \lambda_{j,k} (w_0,\dots, x_k, \dots, w_3)=\\
& \frac{(\partial^{q_k,q_k'}_{x_k} f_j )(w_0,\dots, x_k, \dots, w_3) - \partial_k f_j(w_0,\dots, x_k, \dots, w_3)}{f_j(w_0,\dots, x_k, \dots, w_3)}
\end{align*}
for all $x\in \Omega$. Furthermore, $\lambda_{j,k}$ for all $j,k=0,1,2,3$ can be represent by real dimensional integrals. The same fact for $g$.
\end{remark}
Let us denote by $\mathcal A_{ \vec q , \{\vec q \}'}(\Omega)$ the collection of all $f\in {\bf A}_{ \vec q , \{\vec q \}'}$ that satisfy the hypothesis of Proposition \ref{pro3}. Similarly, the set $\mathcal A_{ \vec {\mathfrak q} , \{\vec {\mathfrak q} \}', r }(\Omega)$ consists of
$g \in {\bf A}_{ \vec {\mathfrak q} , \{\vec {\mathfrak q} \}', r} $ satisfying the conditions in Remark \ref{rem3.5}.

Given $f \in \mathcal A_{\vec q, \{\vec q\}'}(\Omega)$ and $g\in \mathcal A_{\vec{\mathfrak q}, \{\vec{\mathfrak q}\}', r}(\Omega)$ we define
\begin{align*}
(\mathcal I_{\vec q, \{\vec q \}'}f)(w,x) :=& \sum _{j,k=0}^3\psi_j f_j(w_0,\dots, x_k, \dots, w_3) e^{\lambda_{j,k}(w_0,\dots, x_k, \dots, w_3)},
\\
(\mathcal I_{ \vec{ \mathfrak q}, \{\vec{ \mathfrak q} \}'}  g )(w,x) :=&  \sum _{j,k=0}^3\psi_j g_j(w_0,\dots, x_k, \dots, w_3)  e^{\eta_{j,k} (w_0,\dots, x_k, \dots, w_3)},
\end{align*}
for all $x\in \Omega$. Then  Proposition \ref{pro3} and Remark \ref{rem3.5} show  that
\begin{align*}
& {}^{\psi}\mathcal D [ (\mathcal I_{ \vec q,  \{\vec q\}'} f)(w,x)] =  ({}^{\psi} {\partial}_{\vec q, \{\vec q\}'} f )(w,x)
 \sum _{\ell,m=0}^3 e^{\lambda_{\ell,m} (w_0,\dots, x_m, \dots, w_3)} \\
& \qquad - \sum _{ {{   \begin{array}{c}   j,k=0, j\not=k  \\  \ell, m =0, \ell\not= m \end{array} }}}^3 \psi_k \psi_j (\partial^{q_k,q_k'}_{x_k}f_j )(w_0,\dots, x_k, \dots, w_3)e^{\lambda_{\ell,m} (w_0,\dots, x_m, \dots, w_3)},
\end{align*}
and
\begin{align*}
& \mathcal D^{\psi} [ (\mathcal I_{\vec { \mathfrak q}  ,\{\vec { \mathfrak q} \}'} g)(w,x)] =   ({}^{\psi} {\partial}_{\vec { \mathfrak q},  \{\vec { \mathfrak q}\}', r } g )(w,x)
 \sum _{\ell,m=0}^3 e^{\eta_{\ell,m} (w_0,\dots, x_m, \dots, w_3)} \\
&\qquad  - \sum _{ {{   \begin{array}{c}   j,k=0, j\not=k  \\  \ell, m =0, \ell\not= m \end{array} }}}^3 \psi_j \psi_k (\partial^{{ \mathfrak q}_k,{ \mathfrak q}_k'}_{x_k}g_j )(w_0,\dots, x_k, \dots, w_3)e^{\eta_{\ell,m} (w_0,\dots, x_m, \dots, w_3)},
\end{align*}
for all $x\in \Omega$.
\begin{proposition} \label{proposition}
Let $f\in  \mathcal A_{{\vec q},{\{\vec q}\}'}(\Omega)$,
$g\in  \mathcal A_{ \vec {\mathfrak q}, \{\vec {\mathfrak q} \}'}(\Omega)$,
and let $\lambda_{j,k}$,  $\eta_{j,k}$ for $j,k=0,1,2,3$  as in Proposition \ref{pro3} and Remark \ref{rem3.5}. Then
\begin{align*}
& d\left(  (\mathcal I_{  \vec {\mathfrak q},  \{\vec {\mathfrak q}\}'} g)(w,x) \sigma^{{\psi} }_x (\mathcal I_{{\vec q},{\{\vec q}\}'} f)(w,x) \right)   \\
= &     ({}^{\psi} {\partial}_{ \vec {\mathfrak q}, \{\vec {\mathfrak q}  \}' , r } g )(w,x) \sum _{\ell,m=0}^3 e^{\eta_{\ell,m} (w_0,\dots, x_m, \dots, w_3)}  (\mathcal I_{ \vec q , \{\vec q \}'} f)(w,x) dx \\
& +  (\mathcal I_{  \vec {\mathfrak q},  \{\vec {\mathfrak q}\}' } g)(w,x) ({}^{\psi} {\partial}_{\vec q, \{\vec q \}'} f )(w,x) \sum _{\ell,m=0}^3 e^{\lambda_{\ell,m} (w_0,\dots, x_m, \dots, w_3)} dx \\
& - \sum _{ {{   \begin{array}{c}   j,k=0, j\not=k  \\  \ell, m =0, \ell\not= m \end{array} }}}^3  \psi_j \psi_k
	(\partial^{  {\mathfrak q}_k,{\mathfrak q}_k'}_{x_k} g_j ) (w_0,\dots, x_k, \dots, w_3) e^{\eta_{\ell,m} (w_0,\dots, x_m, \dots, w_3)}   (\mathcal I_{{\vec q},{\{\vec q}\}'} f)(w,x) dx\\
& - (\mathcal I_{  \vec {\mathfrak q},  \{\vec {\mathfrak q}\}' } g)(w,x) \sum _{ {{   \begin{array}{c}   j,k=0, j\not=k  \\  \ell, m =0, \ell\not= m \end{array} }}}^3  \psi_k \psi_j
	(\partial^{q_k,q_k'}_{x_k} f_j ) (w_0,\dots, x_k, \dots, w_3) e^{\lambda_{\ell,m} (w_0,\dots, x_m, \dots, w_3)} dx,
\end{align*}
\begin{align*}
& \int_{\partial \Lambda}   (\mathcal I_{  \vec {\mathfrak q},  \{\vec {\mathfrak q}\}' } g)(w,x)     \sigma^{{\psi} }_x      (\mathcal I_{{\vec q},{\{\vec q}\}'} f)(w,x) \\
= &  \ \quad  \int_{  \Lambda}     ({}^{\psi} {\partial}_{ \vec {\mathfrak q}, \{\vec {\mathfrak q}  \}' , r } g )(w,x) \sum _{\ell,m=0}^3 e^{\eta_{\ell,m} (w_0,\dots, x_m, \dots, w_3)}  (\mathcal I_{ \vec q , \{\vec q \}'} f)(w,x) dx     \\
& + \int_{  \Lambda}  (\mathcal I_{  \vec {\mathfrak q},  \{\vec {\mathfrak q}\}' } g)(w,x)    ({}^{\psi} {\partial}_{   \vec q ,  \{\vec q \}'} f )(w,x) \sum _{\ell,m=0}^3 e^{\lambda_{\ell,m} (w_0,\dots, x_m, \dots, w_3)} dx \\
& - \int_{  \Lambda}  \sum _{ {{   \begin{array}{c}   j,k=0, j\not=k  \\  \ell, m =0, \ell\not= m \end{array} }}}^3  \psi_j \psi_k
	(\partial^{  {\mathfrak q}_k,{\mathfrak q}_k'}_{x_k} g_j ) (w_0,\dots, x_k, \dots, w_3) e^{\eta_{\ell,m} (w_0,\dots, x_m, \dots, w_3)}   (\mathcal I_{{\vec q},{\{\vec q}\}'} f)(w,x) dx
\end{align*}
\begin{align*}
& - \int_{  \Lambda}  (\mathcal I_{  \vec {\mathfrak q},  \{\vec {\mathfrak q}\}' } g)(w,x) \sum _{ {{   \begin{array}{c}   j,k=0, j\not=k  \\  \ell, m =0, \ell\not= m \end{array} }}}^3  \psi_k \psi_j
	(\partial^{q_k,q_k'}_{x_k} f_j ) (w_0,\dots, x_k, \dots, w_3) e^{\lambda_{\ell,m} (w_0,\dots, x_m, \dots, w_3)} dx,
\end{align*}
and
\begin{align*}
&  \int_{\partial \Lambda } \left[   (\mathcal I_{ \vec{\mathfrak q}, \{\vec {\mathfrak q}\}'} g)(w,\tau) \sigma_{\tau}^{\psi} K_{\psi}(\tau-x)  +  K_{\psi}(\tau-x)\sigma_{\tau}^{\psi} (\mathcal I_{{\vec q},{\{\vec q}\}'} f)(w,\tau) \right]
\nonumber \\
&  - \int_{\Lambda}  ({}^{\psi} {\partial}_{ \vec{\mathfrak  q}, \{\vec {\mathfrak  q} \}', r } g )(w,y) [\sum _{\ell,m=0}^3 e^{\eta_{\ell,m} (w_0,\dots, y_m, \dots, w_3)}] K_{\psi} (y-x)  dy \nonumber \\
&  - \int_{\Lambda}  K_{\psi} (y-x) [\sum _{\ell,m=0}^3 e^{\lambda_{\ell,m} (w_0,\dots, y_m, \dots, w_3)}]
  ({}^{\psi} {\partial}_{{\vec q},{\{\vec q}\}'} f )(w,y) dy \nonumber \\
& + \int_{\Lambda} [\sum _{ {{   \begin{array}{c}   j,k=0, j\not=k  \\  \ell, m =0, \ell\not= m \end{array} }}}^3  \psi_j \psi_k  (\partial^{{\mathfrak q}_k,{\mathfrak q}_k'}_{y_k} g_j )	(w_0,\dots, y_k, \dots, w_3)e^{\eta_{\ell,m} (w_0,\dots, y_m, \dots, w_3)}]
 K_{\psi} (y-x)  dy \nonumber \\
& + \int_{\Lambda}  K_{\psi} (y-x) [\sum _{ {{   \begin{array}{c}   j,k=0, j\not=k  \\  \ell, m =0, \ell\not= m \end{array} }}}^3  \psi_k \psi_j  (\partial^{q_k,q_k'}_{y_k} f_j )	(w_0,\dots, y_k, \dots, w_3)e^{\lambda_{\ell,m} (w_0,\dots, y_m, \dots, w_3)}] dy \nonumber \\
=  &  \left\{ \begin{array}{ll} \displaystyle  \sum _{j,k=0}^3\psi_j  (   g_j  e^{\eta_{j,k}} +  f_j  e^{\lambda_{j,k} } ) (w_0,\dots, x_k, \dots, w_3), &  x\in \Lambda,  \\ 0 , &  x\in \Omega\setminus\overline{\Lambda}.
\end{array} \right.
\end{align*}
\end{proposition}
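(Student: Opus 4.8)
The plan is to reduce everything to the classical Stokes and Borel--Pompeiu formulas \eqref{Stokes} and \eqref{BPForm}, applied to the auxiliary functions $(\mathcal I_{\vec q,\{\vec q\}'}f)(w,\cdot)$ and $(\mathcal I_{\vec{\mathfrak q},\{\vec{\mathfrak q}\}'}g)(w,\cdot)$, exactly in the spirit of Proposition~\ref{prop} but now using the refined identities recorded just before the statement (the consequences of Proposition~\ref{pro3} and Remark~\ref{rem3.5}). The variable $w$ is a fixed parameter throughout, so for each fixed $w$ these are genuine $C^1$ functions of $x$ on $\Omega$, and $\partial\Lambda\subset\Omega$ is a $3$-dimensional smooth surface, so the hypotheses of \eqref{Stokes} and \eqref{BPForm} are met.

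First I would establish the differential identity. Apply the differential Stokes formula \eqref{Stokes} with the pair $g\rightsquigarrow(\mathcal I_{\vec{\mathfrak q},\{\vec{\mathfrak q}\}'}g)(w,x)$ and $f\rightsquigarrow(\mathcal I_{\vec q,\{\vec q\}'}f)(w,x)$ to get
\begin{align*}
d\!\left((\mathcal I_{\vec{\mathfrak q},\{\vec{\mathfrak q}\}'}g)(w,x)\,\sigma^{\psi}_x\,(\mathcal I_{\vec q,\{\vec q\}'}f)(w,x)\right)
=&\Big\{\mathcal D^{\psi}\big[(\mathcal I_{\vec{\mathfrak q},\{\vec{\mathfrak q}\}'}g)(w,x)\big]\,(\mathcal I_{\vec q,\{\vec q\}'}f)(w,x)\\
&+(\mathcal I_{\vec{\mathfrak q},\{\vec{\mathfrak q}\}'}g)(w,x)\,{}^{\psi}\mathcal D\big[(\mathcal I_{\vec q,\{\vec q\}'}f)(w,x)\big]\Big\}dx,
\end{align*}
and then substitute the two boxed formulas preceding the statement for ${}^{\psi}\mathcal D[(\mathcal I_{\vec q,\{\vec q\}'}f)(w,x)]$ and $\mathcal D^{\psi}[(\mathcal I_{\vec{\mathfrak q},\{\vec{\mathfrak q}\}'}g)(w,x)]$. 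Each of those expands as a ``main'' term proportional to ${}^{\psi}\partial_{\vec q,\{\vec q\}'}f$ (resp.\ ${}^{\psi}\partial_{\vec{\mathfrak q},\{\vec{\mathfrak q}\}',r}g$) times the sum of exponentials, minus a ``diagonal-defect'' double sum over $j\neq k$, $\ell\neq m$. Collecting the four resulting pieces in order gives precisely the claimed formula for $d(\cdots)$; integrating it over $\Lambda$ and invoking the integral form of \eqref{Stokes} gives the second displayed equality.

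For the Borel--Pompeiu part I would apply \eqref{BPForm} to the same pair of functions. This yields the boundary integrals over $\partial\Lambda$ against $K_\psi$, minus volume integrals containing ${}^{\psi}\mathcal D[(\mathcal I_{\vec q,\{\vec q\}'}f)(w,y)]$ and $\mathcal D^{\psi}[(\mathcal I_{\vec{\mathfrak q},\{\vec{\mathfrak q}\}'}g)(w,y)]$, and on the right-hand side the value $(\mathcal I_{\vec q,\{\vec q\}'}f)(w,x)+(\mathcal I_{\vec{\mathfrak q},\{\vec{\mathfrak q}\}'}g)(w,x)$ for $x\in\Lambda$ and $0$ for $x\in\Omega\setminus\overline\Lambda$; the latter is exactly $\sum_{j,k}\psi_j(g_j e^{\eta_{j,k}}+f_j e^{\lambda_{j,k}})(w_0,\dots,x_k,\dots,w_3)$. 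Substituting the two boxed expansions for ${}^{\psi}\mathcal D[\mathcal I\,f]$ and $\mathcal D^{\psi}[\mathcal I\,g]$ inside the volume integrals splits each into a main term (which moves the deformed operator out, multiplied by the exponential sum, against $K_\psi$) and a defect double sum; the signs work out so that the defect terms appear with a plus sign on the left-hand side, matching the statement. One should be mildly careful about the placement of $K_\psi$ and the order of the quaternionic factors — $K_\psi$ sits on the left in the $f$-term and on the right in the $g$-term, and the $\psi_k$ coming from the right-Fueter operator on $g$ must be kept on the correct side — but this is purely bookkeeping.

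The only genuine subtlety, and the step I expect to require the most care, is verifying that the defect double sums emerging from the two boxed identities combine into exactly the double sums written in the statement, with the stated sign and the stated placement of the pair $\psi_k\psi_j$ (versus $\psi_j\psi_k$) — note that in the $g$-defect term of Remark~\ref{rem3.5} the product appears as $\psi_j\psi_k$, whereas in the $f$-defect term of Proposition~\ref{pro3} it is $\psi_k\psi_j$, and these orderings must be transcribed faithfully since $\psi_j\psi_k\neq\psi_k\psi_j$ in general. Everything else is a direct, if lengthy, substitution. Once the differential identity is in place, the two integral identities follow by integrating over $\Lambda$ (using $\partial\Lambda\subset\Omega$ smooth) and by the classical Borel--Pompeiu representation \eqref{BPForm}, respectively, with no further obstacle.
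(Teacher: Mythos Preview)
Your proposal is correct and follows essentially the same route as the paper's own proof: apply the classical Stokes identity \eqref{Stokes} and the Borel--Pompeiu formula \eqref{BPForm} to the auxiliary functions $(\mathcal I_{\vec q,\{\vec q\}'}f)(w,\cdot)$ and $(\mathcal I_{\vec{\mathfrak q},\{\vec{\mathfrak q}\}'}g)(w,\cdot)$, then substitute the expansions from Proposition~\ref{pro3} and Remark~\ref{rem3.5}. The only cosmetic difference is that the paper derives the Borel--Pompeiu identity for $f$ and $g$ separately and then adds them, whereas you invoke \eqref{BPForm} directly for the pair; by linearity this is the same computation, and your attention to the ordering of $\psi_j\psi_k$ versus $\psi_k\psi_j$ is exactly the bookkeeping the paper leaves implicit.
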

\begin{proof}
The first two formulas are obtained using \eqref{funct} in the identities \eqref{Stokes} and combining Proposition \ref{pro3} with Remark \ref{rem3.5}. These computations, similar to that in the proof of Proposition \ref{prop}, is omitted. On the other hand, from \eqref{BPForm} we have
\begin{align*}
& \int_{\partial \Omega }    K_{\psi}(\tau-x)\sigma_{\tau}^{\psi} (\mathcal I_{{\vec q},{\{\vec q}\}'} f)(w,\tau)
 - \int_{\Omega}  K_{\psi} (y-x) {}^{\psi}\mathcal D \left[(\mathcal I_{{\vec q},{\{\vec q}\}'} f)(w,y)\right]dy
\end{align*}
\begin{align*}
=  &  \left\{ \begin{array}{ll} \displaystyle  \sum _{j,k=0}^3\psi_j f_j(w_0,\dots, x_k, \dots, w_3)  e^{\lambda_{j,k} (w_0,\dots, x_k, \dots, w_3)}
  , &  x\in \Lambda,  \\ 0 , &  x\in \Omega\setminus\overline{\Lambda},		
\end{array} \right.
\end{align*}
Equivalently, using Proposition \ref{pro3} we see that
\begin{align*}
  &  \int_{\partial \Omega } K_{\psi}(\tau-x)\sigma_{\tau}^{\psi} (\mathcal I_{{\vec q},{\{\vec q}\}'} f)(w,\tau)\\
&  - \int_{\Omega}  K_{\psi} (y-x) ({}^{\psi} {\partial}_{{\vec q},{\{\vec q}\}'} f )(w,y)   [ \sum _{\ell,m=0}^3     e^{\lambda_{\ell,m} (w_0,\dots, y_m, \dots, w_3)}] dy   \nonumber \\
  & + \int_{\Omega}  K_{\psi} (y-x)[
  \sum _{ {{   \begin{array}{c}   j,k=0, j\not=k  \\  \ell, m =0, \ell\not= m \end{array} }}}^3  \psi_k \psi_j
	(\partial^{q_k,q_k'}_{y_k} f_j )(w_0,\dots, y_k, \dots, w_3)           e^{\lambda_{\ell,m} (w_0,\dots, y_m, \dots, w_3)}
  ] dy   \nonumber \\
		=  &       \left\{ \begin{array}{ll}
\displaystyle  \sum _{j,k=0}^3\psi_j f_j(w_0,\dots, x_k, \dots, w_3)  e^{\lambda_{j,k} (w_0,\dots, x_k, \dots, w_3)}
   , &  x\in \Lambda,  \\ 0 , &  x\in \Omega\setminus\overline{\Lambda} .
\end{array} \right.
\end{align*}
The proof for the identity with $g$ is analogous to the above and finally, the sum of both formula give us the last identity.
\end{proof}
Taking $x=w$ in the last result of the Proposition \ref{proposition} yields
\begin{align*}
&  \int_{\partial \Lambda } \left[   (\mathcal I_{ \vec{\mathfrak q}, \{\vec {\mathfrak q}\}'} g)(w,\tau) \sigma_{\tau}^{\psi} K_{\psi}(\tau-w) + K_{\psi}(\tau-w)\sigma_{\tau}^{\psi} (\mathcal I_{{\vec q},{\{\vec q}\}'} f)(w,\tau) \right]
\nonumber \\
&  - \int_{\Lambda}  ({}^{\psi} {\partial}_{ \vec{\mathfrak  q}, \{\vec {\mathfrak  q} \}', r } g )(w,y)      [\sum _{\ell,m=0}^3 e^{\eta_{\ell,m} (w_0,\dots, y_m, \dots, w_3)}   ]
     K_{\psi} (y-w)  dy \nonumber \\
&  - \int_{\Lambda}  K_{\psi} (y-w) [\sum _{\ell,m=0}^3 e^{\lambda_{\ell,m} (w_0,\dots, y_m, \dots, w_3)}   ]
  ({}^{\psi} {\partial}_{{\vec q},{\{\vec q}\}'} f )(w,y) dy \nonumber \\
& + \int_{\Lambda} [\sum _{ {{   \begin{array}{c}   j,k=0, j\not=k  \\  \ell, m =0, \ell\not= m \end{array} }}}^3  \psi_j \psi_k  (\partial^{{\mathfrak q}_k,{\mathfrak q}_k'}_{y_k} g_j )	(w_0,\dots, y_k, \dots, w_3)e^{\eta_{\ell,m} (w_0,\dots, y_m, \dots, w_3)}]
 K_{\psi} (y-w) dy
\end{align*}
\begin{align*}
& + \int_{\Lambda}  K_{\psi} (y-w) [\sum _{ {{   \begin{array}{c}   j,k=0, j\not=k  \\  \ell, m =0, \ell\not= m \end{array} }}}^3  \psi_k \psi_j  (\partial^{q_k,q_k'}_{y_k} f_j )	(w_0,\dots, y_k, \dots, w_3)e^{\lambda_{\ell,m} (w_0,\dots, y_m, \dots, w_3)}] dy \nonumber \\
=  &  \left\{ \begin{array}{ll} \displaystyle  \sum _{j,k=0}^3\psi_j  (   g_j(w)  e^{\eta_{j,k}(w_0,\dots, w_k, \dots, w_3)  } +  f_j(w) e^{\lambda_{j,k} (w_0,\dots, w_k, \dots, w_3)} ) , &  w\in \Lambda,  \\ 0 , &  w\in \Omega\setminus\overline{\Lambda}.
\end{array} \right.
\end{align*}
\begin{corollary} Under the hypothesis and notations of Proposition \ref{proposition}, if moreover
$$({}^{\psi} {\partial}_{\vec{\mathfrak q}, \{\vec {\mathfrak q}\}', r}g)(w,x) = ({}^{\psi} {\partial}_{\vec q, \{\vec q \}'}f)(w,x) = 0, \quad  x\in \Omega$$
we have
\begin{align*}
& d\left(  (\mathcal I_{  \vec {\mathfrak q},  \{\vec {\mathfrak q}\}' } g)(w,x) \sigma^{{\psi} }_x (\mathcal I_{{\vec q},{\{\vec q}\}'} f)(w,x) \right)   \\
= & - \sum _{ {{   \begin{array}{c}   j,k=0, j\not=k  \\  \ell, m =0, \ell\not= m \end{array} }}}^3  \psi_j \psi_k
	(\partial^{  {\mathfrak q}_k,{\mathfrak q}_k'}_{x_k} g_j ) (w_0,\dots, x_k, \dots, w_3) e^{\eta_{\ell,m} (w_0,\dots, x_m, \dots, w_3)}   (\mathcal I_{{\vec q},{\{\vec q}\}'} f)(w,x) dx\\
& - (\mathcal I_{  \vec {\mathfrak q},  \{\vec {\mathfrak q}\}' } g)(w,x) \sum _{ {{   \begin{array}{c}   j,k=0, j\not=k  \\  \ell, m =0, \ell\not= m \end{array} }}}^3  \psi_k \psi_j
	(\partial^{q_k,q_k'}_{x_k} f_j ) (w_0,\dots, x_k, \dots, w_3) e^{\lambda_{\ell,m} (w_0,\dots, x_m, \dots, w_3)} dx,
\end{align*}
\begin{align*}
& \int_{\partial \Lambda}   (\mathcal I_{  \vec {\mathfrak q},  \{\vec {\mathfrak q}\}' } g)(w,x)     \sigma^{{\psi} }_x      (\mathcal I_{{\vec q},{\{\vec q}\}'} f)(w,x) \\
= & - \int_{  \Lambda}  \sum _{ {{   \begin{array}{c}   j,k=0, j\not=k  \\  \ell, m =0, \ell\not= m \end{array} }}}^3  \psi_j \psi_k
	(\partial^{  {\mathfrak q}_k,{\mathfrak q}_k'}_{x_k} g_j ) (w_0,\dots, x_k, \dots, w_3) e^{\eta_{\ell,m} (w_0,\dots, x_m, \dots, w_3)}   (\mathcal I_{{\vec q},{\{\vec q}\}'} f)(w,x) dx
\end{align*}
\begin{align*}
& - \int_{  \Lambda}  (\mathcal I_{  \vec {\mathfrak q},  \{\vec {\mathfrak q}\}' } g)(w,x) \sum _{ {{   \begin{array}{c}   j,k=0, j\not=k  \\  \ell, m =0, \ell\not= m \end{array} }}}^3  \psi_k \psi_j
	(\partial^{q_k,q_k'}_{x_k} f_j ) (w_0,\dots, x_k, \dots, w_3) e^{\lambda_{\ell,m} (w_0,\dots, x_m, \dots, w_3)} dx,
\end{align*}
and
\begin{align*}
&  \int_{\partial \Lambda } \left[   (\mathcal I_{ \vec{\mathfrak q}, \{\vec {\mathfrak q}\}'} g)(w,\tau)      \sigma_{\tau}^{\psi}    K_{\psi}(\tau-x)  +      K_{\psi}(\tau-x)\sigma_{\tau}^{\psi} (\mathcal I_{{\vec q},{\{\vec q}\}'} f)(w,\tau)   \right]
\nonumber \\
& + \int_{\Lambda} [\sum _{ {{   \begin{array}{c}   j,k=0, j\not=k  \\  \ell, m =0, \ell\not= m \end{array} }}}^3  \psi_j \psi_k  (\partial^{{\mathfrak q}_k,{\mathfrak q}_k'}_{y_k} g_j )	(w_0,\dots, y_k, \dots, w_3)e^{\eta_{\ell,m} (w_0,\dots, y_m, \dots, w_3)}]
 K_{\psi} (y-x)  dy \nonumber \\
& + \int_{\Lambda}  K_{\psi} (y-x) [\sum _{ {{   \begin{array}{c}   j,k=0, j\not=k  \\  \ell, m =0, \ell\not= m \end{array} }}}^3  \psi_k \psi_j  (\partial^{q_k,q_k'}_{y_k} f_j )	(w_0,\dots, y_k, \dots, w_3)e^{\lambda_{\ell,m} (w_0,\dots, y_m, \dots, w_3)}] dy \nonumber \\
=  &  \left\{ \begin{array}{ll} \displaystyle  \sum _{j,k=0}^3\psi_j  (   g_j  e^{\eta_{j,k}} +  f_j  e^{\lambda_{j,k} } ) (w_0,\dots, x_k, \dots, w_3), &  x\in \Lambda,  \\ 0 , &  x\in \Omega\setminus\overline{\Lambda}.
\end{array} \right.
\end{align*}
In particular, taking $x=w$ the previous formula becomes
\begin{align*}
&  \int_{\partial \Lambda } \left[(\mathcal I_{ \vec{\mathfrak q}, \{\vec {\mathfrak q}\}'} g)(w,\tau) \sigma_{\tau}^{\psi} K_{\psi}(\tau-w)  +      K_{\psi}(\tau-w)\sigma_{\tau}^{\psi} (\mathcal I_{{\vec q},{\{\vec q}\}'} f)(w,\tau)\right]
\nonumber \\
& + \int_{\Lambda} [\sum _{ {{   \begin{array}{c}   j,k=0, j\not=k  \\  \ell, m =0, \ell\not= m \end{array} }}}^3  \psi_j \psi_k  (\partial^{{\mathfrak q}_k,{\mathfrak q}_k'}_{y_k} g_j )	(w_0,\dots, y_k, \dots, w_3)e^{\eta_{\ell,m} (w_0,\dots, y_m, \dots, w_3)}]
 K_{\psi} (y-w)  dy \nonumber \\
\end{align*}
\begin{align*}
& + \int_{\Lambda}  K_{\psi} (y-w) [\sum _{ {{   \begin{array}{c}   j,k=0, j\not=k  \\  \ell, m =0, \ell\not= m \end{array} }}}^3  \psi_k \psi_j  (\partial^{q_k,q_k'}_{y_k} f_j )	(w_0,\dots, y_k, \dots, w_3)e^{\lambda_{\ell,m} (w_0,\dots, y_m, \dots, w_3)}] dy \nonumber \\
=  &  \left\{ \begin{array}{ll} \displaystyle  \sum _{j,k=0}^3\psi_j  (   g_j (w) e^{\eta_{j,k} (w_0,\dots, w_k, \dots, w_3)  } +  f_j(w)e^{\lambda_{j,k}(w_0,\dots, w_k, \dots, w_3) } ) , &  w\in \Lambda,  \\ 0 , &  w\in \Omega\setminus\overline{\Lambda}.
\end{array} \right.
\end{align*}
\end{corollary}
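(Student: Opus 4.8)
The plan is to obtain all four displayed identities of the corollary by substituting the extra hypothesis $({}^{\psi}\partial_{\vec{\mathfrak q},\{\vec{\mathfrak q}\}',r}g)(w,x)=0=({}^{\psi}\partial_{\vec q,\{\vec q\}'}f)(w,x)$, valid for all $x\in\Omega$, into the three identities of Proposition~\ref{proposition}, and then to restrict to the diagonal $x=w$. No new analytic ingredient is required: everything is a specialization of Proposition~\ref{proposition}, which itself rests on \eqref{Stokes}, \eqref{BPForm}, and the defining relations of the auxiliary functions $\lambda_{j,k}$, $\eta_{j,k}$ from Proposition~\ref{pro3} and Remark~\ref{rem3.5}.

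First I would look at the differential (i.e.\ $4$-form) identity of Proposition~\ref{proposition}. Its right-hand side splits into four blocks: the first two carry, respectively, the factors $({}^{\psi}\partial_{\vec{\mathfrak q},\{\vec{\mathfrak q}\}',r}g)(w,x)$ and $({}^{\psi}\partial_{\vec q,\{\vec q\}'}f)(w,x)$, while the last two are the ``cross'' blocks built from the double sums over $j\neq k$, $\ell\neq m$. Under the vanishing hypothesis the first two blocks are identically $0$, so only the two cross blocks survive; this is precisely the first displayed formula of the corollary. Integrating this identity over $\Lambda$ — equivalently, performing the same substitution directly in the second, Stokes-type identity of Proposition~\ref{proposition} (which in turn came from \eqref{Stokes} via $\int_{\partial\Lambda}\omega=\int_\Lambda d\omega$) — gives the second displayed formula. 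The same substitution in the third, Borel--Pompeiu-type identity of Proposition~\ref{proposition} annihilates the two volume integrals containing $({}^{\psi}\partial_{\vec{\mathfrak q},\{\vec{\mathfrak q}\}',r}g)(w,y)$ and $({}^{\psi}\partial_{\vec q,\{\vec q\}'}f)(w,y)$, and what is left is the boundary integral of $(\mathcal I_{\vec{\mathfrak q},\{\vec{\mathfrak q}\}'}g)(w,\tau)\sigma^{\psi}_\tau K_\psi(\tau-x)+K_\psi(\tau-x)\sigma^{\psi}_\tau(\mathcal I_{\vec q,\{\vec q\}'}f)(w,\tau)$ together with the two cross volume integrals, equated to $\sum_{j,k}\psi_j(g_j e^{\eta_{j,k}}+f_j e^{\lambda_{j,k}})(w_0,\dots,x_k,\dots,w_3)$ when $x\in\Lambda$ and to $0$ when $x\in\Omega\setminus\overline{\Lambda}$; this is the third displayed formula.

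To get the last displayed identity I would simply set $x=w$ in the Borel--Pompeiu-type identity just derived. Three small points deserve a line each: the auxiliary functions $\lambda_{j,k},\eta_{j,k}$ depend on the tuple $(w_0,\dots,x_k,\dots,w_3)$, hence at $x=w$ they become $\lambda_{j,k}(w_0,\dots,w_k,\dots,w_3)$ and $\eta_{j,k}(w_0,\dots,w_k,\dots,w_3)$; the components satisfy $f_j(w_0,\dots,w_k,\dots,w_3)=f_j(w)$ and $g_j(w_0,\dots,w_k,\dots,w_3)=g_j(w)$, so the right-hand side becomes $\sum_{j,k}\psi_j\bigl(g_j(w)e^{\eta_{j,k}(w_0,\dots,w_k,\dots,w_3)}+f_j(w)e^{\lambda_{j,k}(w_0,\dots,w_k,\dots,w_3)}\bigr)$ for $w\in\Lambda$ and $0$ for $w\in\Omega\setminus\overline{\Lambda}$; and the kernel contributions $K_\psi(\tau-w)$ and $\int_\Lambda(\cdots)K_\psi(y-w)\,dy$ stay meaningful because the behaviour of $K_\psi$ at its pole, and the case distinction $w\in\Lambda$ versus $w\notin\overline{\Lambda}$, are already absorbed into \eqref{BPForm}.

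The whole argument is thus a bookkeeping exercise. The only genuinely delicate point — modest as it is, and what I would flag as the main obstacle — is respecting the non-commutativity of the quaternions in the surviving cross terms: one must keep the factor $\psi_j\psi_k$ in the $g$-term and $\psi_k\psi_j$ in the $f$-term, in the order recorded in Proposition~\ref{pro3} and Remark~\ref{rem3.5}, and likewise keep the kernel $K_\psi$ on the correct side of each cross integrand, so that the three displayed identities and their $x=w$ specialization come out verbatim as stated.
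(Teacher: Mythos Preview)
Your proposal is correct and matches the paper's approach: the corollary is stated without a separate proof precisely because it is obtained by substituting the vanishing hypothesis $({}^{\psi}\partial_{\vec{\mathfrak q},\{\vec{\mathfrak q}\}',r}g)(w,x)=0=({}^{\psi}\partial_{\vec q,\{\vec q\}'}f)(w,x)$ into the three identities of Proposition~\ref{proposition} and then specializing to $x=w$. Your remarks on keeping the quaternionic factors $\psi_j\psi_k$ versus $\psi_k\psi_j$ and the sidedness of $K_\psi$ in the correct order are apt and reflect exactly the care already baked into Proposition~\ref{proposition}.
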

\section*{Declarations}
\subsection*{Funding} This work was partially supported by Instituto Polit\'ecnico Nacional (grant numbers SIP20232103, SIP20230312) and CONACYT.
\subsection*{Competing Interests} The authors declare that they have no competing interests regarding the publication of this paper.
\subsection*{Author contributions} All authors contributed equally to the study, read and approved the final version of the submitted manuscript.
\subsection*{Availability of data and material} Not applicable
\subsection*{Code availability} Not applicable
\subsection*{ORCID}
\noindent
Jos\'e Oscar Gonz\'alez-Cervantes: https://orcid.org/0000-0003-4835-5436\\
Juan Bory-Reyes: https://orcid.org/0000-0002-7004-1794\\
Irene Sabadini: https://orcid.org/0000-0002-9930-4308

\begin{center}
Paper  submitted in  Analysis and Mathematical Physics
\end{center}

\end{document}